\newtheorem{theorem}{Theorem}
\newtheorem{proposition}{Proposition}
\newtheorem{remark}{Remark}
\newcommand{\xkd}[1]{x_{#1}^\delta} 
\newcommand{\xk}[1]{x_{#1}} 
\newcommand{\zkd}[1]{z_{#1}^\delta} 
\newcommand{\yd}{y^\delta} 
\newcommand{\xd}{x^\dagger} 
\title{Optimal-order convergence of Nesterov acceleration 
for linear ill-posed problems}
\author{Stefan Kindermann\thanks{Industrial Mathematics Institute, Johannes Kepler University
 Linz, Linz, Austria ({\tt kindermann@indmath.uni-linz.ac.at}). }} 
\date{}
\begin{document}
\maketitle
\begin{quote}{\em 
This article is dedicated to A.~Neubauer on the occasion of his 60th birthday. 
His analytical insight shall be our unmatched  benchmark.}
\end{quote} 

\begin{abstract}
We show that Nesterov acceleration 
 is an optimal-order 
iterative regularization method for linear ill-posed problems 
provided that a parameter is chosen accordingly to the smoothness 
of the solution. This result is proven both for an a priori stopping rule and 
for the discrepancy principle. 
The essential tool to obtain this result is a
representation of the residual polynomials via Gegenbauer polynomials. 
\end{abstract} 

\section{Introduction}
One option to calculate a regularized solution to 
 an ill-posed operator equation $A x = \yd$, with $A: X\to Y$ and 
 $X,Y$ being Hilbert spaces, is to employ iterative regularization 
 schemes, where approximate solutions $\xkd{k}$ are calculated iteratively
 combined with a stopping rule as regularization parameter choice.  
 The simplest one being  Landweber iteration (cf., e.g.,~\cite{EHN96}), which has 
 the downside of being rather slow. To speed up convergence, acceleration 
 schemes may be used such as the following {\em Nesterov acceleration}:  
\begin{align} \label{Nfirst} 
\begin{split} 
\xkd{k+1} &= \zkd{k} + A^*(\yd - A \zkd{k}), \qquad k \geq 1  \\
\zkd{k} & = \xkd{k} + \alpha_k (\xkd{k}  - \xkd{k-1}), \qquad x_0 = 0, x_1 = A^*\yd,
\end{split} 
\end{align} 
where $\|A^*A\| \leq 1$ is assumed and where 
the sequence $\alpha_k$ is chosen, for instance, as 
\begin{align}\label{achoice}
 \alpha_k = \frac{k-1}{k+\beta},  \qquad k \geq 1, \quad  \beta > -1.  \end{align}
Here $\beta$ is a parameter; common choices are,    for example,   $\beta = 1$ or $\beta=2$.  
 We remark that other 
alternatives for the sequence
   $\alpha_k$ are possible as well, but for the main analysis  of this paper we only consider  \eqref{achoice}.

This iteration (in a general nonlinear context) was suggested by Yurii Nesterov 
for general convex optimization problems \cite{Ne83}. 
It is an instance of a method that achieves the 
best rate of convergence (in the sense of 
objective function decrease) that is generally possible for a  first-order 
method. The Nesterov acceleration 
can be employed to speed up 
convergence of gradient methods
in nonlinear or convex optimization. 
A particular successful instance is the FISTA algorithm of Beck and Teboulle
\cite{BeTe09} for nondifferentiable convex optimization. 

In the realm of ill-posed problems, Hubmer and Ramlau 
\cite{HuRa18} performed a 
convergence  analysis for nonlinear problems, and shows 
the efficiency of the method.

The background and main motivation of the 
present article is the recent interesting analysis of 
Neubauer \cite{Ne17} for ill-posed problems 
in the linear case. He showed that \eqref{Nfirst}
is an iterative regularization scheme and, 
more important proved convergence rates, 
which are  of optimal order only  for a priori parameter choices and 
in case of low smoothness of the solution and suboptimal else.
What is puzzling is that the method shows 
a quite unusual 
 "semi-saturation" phenomenon
 (we explain this term below in Section~\ref{sec:semisat}).
 
 
Our contribution in this article is 
twofold: At first, we prove a formula 
for the residuals of the iteration \eqref{Nfirst} 
involving Gegenbauer polynomials. 
On this basis, we can build a convergence 
rate analysis, which improves and extends 
the results of Neubauer. In particular, 
we show that the method can always 
be made an optimal-order method if 
the parameter $\beta$ is chosen accordingly 
to the (H\"older-)smoothness index of the solution. This result holds 
 for both an a priori stopping rule and for the discrepancy principle. 

Our analysis also explains the quite 
nebulous role that this parameter plays in the 
iteration, as it is related to the index of 
the orthogonal polynomials appearing in the 
residual formula. 

Moreover, the above mentioned residual representation 
also clearly elucidates the semi-saturation phenomenon 
because the iteration can be interpreted as a mixture
of a saturating  iteration (Brakhage's $\nu$-method) and a non-saturating one (Landweber method).

In the following we employ some standard notation of regularization theory as in \cite{EHN96}: 
$\delta = \|A \xd -\yd\|$ is the 
noise level and $\xd$ denotes the minimum-norm solution to the 
operator equation $A x = y$ with exact data $y= A\xd$. 
The index $\delta$ of $\yd$ indicates noisy data, and analogous,  
$\xkd{k}$ denotes  the iterates of
\eqref{Nfirst} with noisy data $\yd$, 
while the lack of $\delta$  indicates exact data $y$ and correspondingly 
the iteration $\xk{k}$  with exact data $y$ in place of $\yd$ in \eqref{Nfirst}.

\section{Residual polynomials for Nesterov acceleration} 
Our work follows the general theory of spectral filter-based regularization methods
as in \cite{EHN96}, where the convergence 
analysis results from  estimates of the corresponding 
filter function. The first main result, 
Theorem~\ref{thone} is quite useful for this 
purpose as it represents the residual function 
in terms of known polynomials.

The iteration \eqref{Nfirst} is a Krylov-space method, and the residual can 
be expressed as 
\[ \yd- A \xkd{k} =: r_k(AA^*)\yd \]
with the residual polynomials satisfying the recurrence relation (cf.~\cite{Ne17})
\begin{align}\label{resi}
\begin{split} 
r_{k+1}(\lambda) &= (1-\lambda) \left[ r_k(\lambda) + \alpha_k(r_{k}(\lambda) - r_{k-1}(\lambda)) \right],
\quad k\geq 1, \\
\qquad  r_0(\lambda) &= 1, \qquad r_1(\lambda) = (1-\lambda). 
\end{split} 
\end{align}
This is a simple consequence of the definition in  \eqref{Nfirst}. 
The $k$-th iterate  can be expressed via spectral filter functions  
\[ \xkd{k} = g_k(A^*A) A^*\yd, \qquad g_k(\lambda) := \frac{1- r_k(\lambda)}{\lambda}. \]

Observe that the three-term recursion  \eqref{resi} is {\em not} of the form 
to apply Favard's theorem \cite{Fa35}, hence 
$r_k$ is  {\em not} any orthogonal polynomial with respect to some weight functions. 
(Note that Favard's theorem  fully characterizes  three-term  recurrence relations that 
lead to orthogonal polynomials).    

Before we proceed, we may compare the residual polynomials with other well-known cases. 
For classical Landweber iteration \cite{EHN96}, which is obtained by setting $\alpha_k = 0$ 
and thus $\zkd{k} = \xkd{k}$, the corresponding residual functions $r_k = :r_k^{(LW)}$ is 
\[ r_{k}^{(LW)}(\lambda) = (1-\lambda)^k. \] 
On the other hand, another class of  well-known iteration methods for ill-posed problems  that are  based  
on orthogonal polynomials are two-step  semiiterative methods \cite{Ha91}.
They have the form 
\begin{align*} 
\xkd{k+1} &= \xkd{k} + \mu_{k+1}(x_{k} - x_{k-1}) + \omega_{k+1} A^*(\yd - A x_{k}), \qquad k >1, 
\end{align*} 
where $\mu_k$ and $\omega_k$ are appropriately chosen sequences. 
The corresponding residual functions satisfy the recurrence relation 
\begin{align} 
r_{k+1}(\lambda) &= (1 - \omega_{k+1} \lambda) r_{k}(\lambda)+ 
\mu_{k+1}(r_{k} - r_{k-1}), \qquad k >1,  \label{recursion} 
\end{align} 
and thus, $r_k(\lambda)$ form a sequence of orthogonal polynomials.
Of special interest in ill-posed problems are the $\nu$-methods of Brakhage~\cite{Br87,Ha91}, defined by 
the sequences, for $k>1$,
\begin{align*} 
 \mu_{k+1} &= \frac{(k-1) (2k -2) (2k + 2 \nu -1)}{(k+2 \nu-1) (2k + 4 \nu-1) (2k + 2 \nu -3) }, \\ 
\qquad 
\omega_{k+1} &= 4 \frac{(2k + 2 \nu-1)(k + \nu -1)}{(k + 2 \nu-1) (2k + 4 \nu -1) }, 
\end{align*}
the initial values $x_0 = 0,$ $x_1 = \frac{4 \nu +2}{4 \nu +1} T^* \yd$, and 
with $\nu>0$ a user-selected parameter.  
The associated residual polynomials  $r_k = :r_k^{(\nu)}$ related to 
 \eqref{recursion}  
with $r_0 = 1$,  $r_1 = 1- \lambda \frac{4 \nu +2}{4 \nu +1},$
have the representation \cite{Br87}
\[ r_k^{(\nu)}(\lambda) = \frac{C_{2k}^{(2\nu)}(\sqrt{1-\lambda})}{C_{2k}^{(2\nu)}(1)}, \]
where $C_n^{(\alpha)}$ denotes the Gegenbauer polynomials (aka. ultraspherical polynomials); cf.~\cite{AbSt64}. 

We now obtain the corresponding representation for the Nesterov residual polynomials, 
which is the basis of this article.  
\begin{theorem}\label{thone}
Let $\beta >-1$.
The residual polynomials for the Nesterov acceleration \eqref{Nfirst} with \eqref{achoice} are 
\begin{equation}\label{resibesi} 
 r_k(\lambda)  = (1-\lambda)^\frac{k+1}{2} 
  \frac{C_{k-1}^{(\frac{\beta +1}{2})}(\sqrt{1-\lambda})}{C_{k-1}^{(\frac{\beta +1}{2})}(1)}, \qquad k \geq 1, 
\end{equation} 
with the Gegenbauer polynomials $C_n^{(\alpha)}$. 
\end{theorem}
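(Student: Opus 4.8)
The plan is to verify that the right-hand side of \eqref{resibesi} satisfies the three-term recurrence \eqref{resi} together with its initial data, and then to invoke uniqueness of the solution of that recurrence. It is convenient to substitute $s := \sqrt{1-\lambda}$ and abbreviate $\alpha := \frac{\beta+1}{2}$ and $c_n := C_n^{(\alpha)}(1)$, so that the claim reads $r_k = s^{k+1} C_{k-1}^{(\alpha)}(s)/c_{k-1}$. The whole point is to reduce the $\lambda$-recurrence \eqref{resi}, whose coefficients $\alpha_k$ from \eqref{achoice} depend on $k$, to the classical three-term recurrence of the Gegenbauer polynomials in the variable $s$.

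First I would dispose of the base cases. Using $C_0^{(\alpha)}\equiv 1$ one checks $r_1 = s^2 = 1-\lambda$, matching the initialization in \eqref{resi}. Since $\alpha_1 = 0$ by \eqref{achoice}, the recurrence gives $r_2 = (1-\lambda)r_1 = s^4$, and with $C_1^{(\alpha)}(s) = 2\alpha s$ the formula yields the same value; note that $r_0$ never enters the computation of $r_2$, which is consistent with \eqref{resibesi} being asserted only for $k\geq 1$.

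For the inductive step I would assume \eqref{resibesi} at indices $k-1$ and $k$ (both $\geq 1$, hence $k\geq 2$) and substitute into $r_{k+1} = (1-\lambda)\bigl[(1+\alpha_k)r_k - \alpha_k r_{k-1}\bigr]$. A common factor $s^{k+2}$ can then be cancelled from every term, and what remains to be shown is the identity
\[ \frac{C_k^{(\alpha)}(s)}{c_k} = (1+\alpha_k)\, s\, \frac{C_{k-1}^{(\alpha)}(s)}{c_{k-1}} - \alpha_k\, \frac{C_{k-2}^{(\alpha)}(s)}{c_{k-2}}. \]
This is to be matched against the standard Gegenbauer recurrence $k\,C_k^{(\alpha)}(s) = 2(k+\alpha-1)\,s\,C_{k-1}^{(\alpha)}(s) - (k+2\alpha-2)\,C_{k-2}^{(\alpha)}(s)$. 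The bridge between the two is the normalization $c_n = \binom{n+2\alpha-1}{n}$, which yields the ratio $c_n/c_{n-1} = (n-1+2\alpha)/n$; with $2\alpha = \beta+1$ this is exactly $(n+\beta)/n$.

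The main, and essentially the only, obstacle is the bookkeeping in this coefficient comparison. Dividing the Gegenbauer recurrence by $c_k$ and inserting the ratios above, the coefficient of $s\,C_{k-1}^{(\alpha)}(s)/c_{k-1}$ becomes $\frac{2(k+\alpha-1)}{k+\beta} = \frac{2k+\beta-1}{k+\beta}$, which I expect to coincide with $1+\alpha_k = \frac{2k+\beta-1}{k+\beta}$ read off from \eqref{achoice}; likewise the coefficient of $C_{k-2}^{(\alpha)}(s)/c_{k-2}$ should collapse, after the cancellation $k+\beta-1 = k-1+2\alpha$, to exactly $\alpha_k = \frac{k-1}{k+\beta}$. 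Once these two elementary identities are confirmed, the inductive step closes and the theorem follows. A generating-function argument based on $\sum_n C_n^{(\alpha)}(x)t^n = (1-2xt+t^2)^{-\alpha}$ is conceivable, but the $k$-dependence of $\alpha_k$ turns it into a differential equation in $t$, so the direct induction is the more economical route.
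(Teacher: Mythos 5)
Your proposal is correct and follows essentially the same route as the paper: after the substitution $s=\sqrt{1-\lambda}$ and factoring out the power of $s$, both arguments reduce the Nesterov recurrence to the classical Gegenbauer three-term recurrence normalized at $x=1$, using the same ratio $C_{n}^{(\alpha)}(1)/C_{n-1}^{(\alpha)}(1)=(n+\beta)/n$ to identify the coefficients with $1+\alpha_k$ and $\alpha_k$. The paper packages this as showing that $h_k(\lambda)=r_k(\lambda)(1-\lambda)^{-(k+1)/2}$ and the normalized Gegenbauer polynomials satisfy identical recursions (via the auxiliary quantity $\theta_k$), whereas you run the equivalent verification as a direct induction with explicit coefficient matching, including the correct observation that $\alpha_1=0$ makes $r_0$ irrelevant.
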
 
 
\begin{proof}
Defining $h_k(\lambda) = r_k(\lambda)(1-\lambda)^{-\frac{k+1}{2}}$ and multiplying \eqref{resi} 
by $(1-\lambda)^{-\frac{k+2}{2}}$ leads to the relation 
\begin{align} 
h_{k+1}(\lambda) &= (1+\alpha_k) \sqrt{1-\lambda} h_k(\lambda)
 - \alpha_k h_{k-1}(\lambda), \qquad k \geq 2,
\label{hrec}  \\ 
h_1(\lambda) &= 1, \qquad  h_2(\lambda)  = \sqrt{1-\lambda}. \nonumber 
\end{align} 
We note that $C_n^{(\frac{\beta +1}{2})}(x)$ satisfy the recursion relation 
(cf.~\cite[p.~782]{AbSt64})
\begin{equation}\label{recgeg}
\begin{split} 
 C_k^{(\frac{\beta +1}{2})}(x) &= x c_k C_{k-1}^{(\frac{\beta +1}{2})}(x)
- d_k C_{k-2}^{(\frac{\beta +1}{2})}(x), \qquad k \geq 2 \\
 C_0^{(\frac{\beta +1}{2})}(x) &= 1, \qquad  C_1^{(\frac{\beta +1}{2})}(x) = (\beta +1) x 
\end{split} 
\end{equation} 
with 
\[ c_k = \frac{2k + \beta-1}{k}, \qquad d_k =  \frac{k + \beta-1}{k}. \]
Using the recurrence relation with $x = 1$, leads to 
\begin{align*} 
 C_k^{(\frac{\beta +1}{2})}(1) &= 
c_k  C_{k-1}^{(\frac{\beta +1}{2})}(1)\left( 
1 - \theta_k^{-1}  \right)= 
d_k C_{k-2}^{(\frac{\beta +1}{2})}(1)\left( \theta_k -1 \right)
\end{align*} 
with 
\[ \theta_k := \frac{c_k  C_{k-1}^{(\frac{\beta +1}{2})}(1)}{d_k  C_{k-2}^{(\frac{\beta +1}{2})}(1) }. \]
Dividing \eqref{recgeg} by  $C_k^{(\frac{\beta +1}{2})}(1)$ and using this relation yields
\begin{align}  \frac{ C_k^{(\frac{\beta +1}{2})}(x)}{ C_k^{(\frac{\beta +1}{2})}(1)}
&= 
\frac{ C_{k-1}^{(\frac{\beta +1}{2})}(x)}{ C_{k-1}^{(\frac{\beta +1}{2})}(1)}
\frac{1}{1  - \theta_k^{-1}} - 
\frac{ C_{k-2}^{(\frac{\beta +1}{2})}(x)}{ C_{k-2}^{(\frac{\beta +1}{2})}(1)}
\frac{1}{\theta_k-1}.   \label{scaledrec} 
\end{align} 
By induction (or by well-known formulae \cite{AbSt64,Sz75}), it can easily  be verified that 
$ \frac{C_{k-1}^{(\frac{\beta +1}{2})}(1)}{ C_{k-2}^{(\frac{\beta +1}{2})}(1)} 
= \frac{k +\beta -1}{k-1},$
from which it follows that $\theta_k -1 = \alpha_k^{-1}$ as well as 
$1-\theta_k^{-1} = (1+\alpha_k)^{-1}$. Thus,  
$\frac{ C_k^{(\frac{\beta +1}{2})}(x)}{ C_k^{(\frac{\beta +1}{2})}(1)}$ satisfies the 
same recursion as $h_{k+1}(\lambda)$, 
  and the corresponding initial values for $k = 0,1$ agree when setting $x = \sqrt{1-\lambda}$.  
This allows us to conclude that 
\[ h_k(\lambda)  =  \frac{ C_{k-1}^{(\frac{\beta +1}{2})}(\sqrt{1-\lambda})}{ C_{k-1}^{(\frac{\beta +1}{2})}(1)},\]
which  proves the theorem. 
\end{proof} 

This theorem relates the residual function of Nesterov acceleration to other known iterations. In particular, the residual $r_k$ is roughly the product of
that of $\frac{k}{2}$ Landweber iterations and that of $\frac{k}{2}$  iterations of a $\nu$-method with 
$\nu = \frac{\beta +1}{4}$. 

\begin{remark} 
Gegenbauer polynomials are special cases of Jacobi polynomials and themselves 
embrace several other orthogonal polynomials
as special cases. Certain values of $\beta$ in \eqref{Nfirst} lead to various 
specializations in \eqref{resibesi}: The choice  $\beta = 0$ leads to Legendre polynomials, 
the often encountered choice $\beta = 1$ leads 
to Chebyshev polynomials of the second kind~\cite{AbSt64}. 

We note that the result of Theorem~\ref{thone} even holds for  $\beta = -1$. 
In this case, only $\alpha_1$ is not well-defined, but it is always 
$0$ for $\beta >-1$. 
Thus, we may extend the definition of the iteration to $\beta = -1$ 
by setting $\xkd{k}:= \lim_{\beta \to -1} \xkd{k}$. 
(This just amounts to slightly modifying \eqref{achoice} by 
 setting $\alpha_1 = 0$ for $k=1$; the remaining iteration is well-defined by \eqref{Nfirst} and \eqref{achoice}.) 
 In this case, we may use \cite[Eq.~22.5.28]{AbSt64} to conclude that 
 the resulting polynomials are Chebyshev polynomials of the first kind. 
\end{remark}

Before we proceed with the convergence analysis, we state for generality the 
corresponding theorem for Nesterov iteration with a general sequence $\alpha_k$. 

\begin{theorem}
Consider the iteration \eqref{Nfirst} with a positive sequence $\alpha_k$. 
Then the corresponding residual function can be expressed as 
\begin{equation} \label{genrep}
 r_k(\lambda) = (1- \lambda)^\frac{k}{2} \frac{P_{k}(\sqrt{1-\lambda})}{P_{k}(1)},
  \qquad k \geq 1, 
 \end{equation} 
where $P_k$ is a sequence of orthogonal polynomials obeying the recurrence relation 
\begin{align}\label{genrec}  
\begin{split}
 P_{k+1}(x) &= c_k x P_{k}(x) - d_k P_{k-1}(x), \qquad  k \geq 1 \\ 
 P_0(x) & = 1, \qquad P_1(x) = c_0 x  
 \end{split}
 \end{align} 
with $c_n$ and $d_n$ recursively defined to satisfy 
\begin{align}\label{cda}
\begin{split}
 \frac{c_1 c_0}{d_1} & = 1 + \frac{1}{\alpha_1} \\  
 \frac{c_k c_{k-1}}{d_k} &= (1 + \frac{1}{\alpha_k})(\alpha_{k-1} +1) \qquad k \geq 2. 
 \end{split} 
  \end{align} 
Conversely, given a sequence of orthogonal polynomials defined by the 
recurrence relation \eqref{genrec} with given sequences $c_n,d_n$. 
Then there exists a sequence $\alpha_k$ (defined via \eqref{cda}) such that the corresponding 
Nesterov iteration \eqref{Nfirst} has a residual function as in \eqref{genrep}.
\end{theorem}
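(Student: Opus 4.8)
The plan is to mirror the proof of Theorem~\ref{thone}, replacing the explicit Gegenbauer recursion by the abstract three-term recursion \eqref{genrec} and tracking the coefficients symbolically. First I would perform the same substitution as before: setting $h_k(\lambda) := r_k(\lambda)(1-\lambda)^{-k/2}$ in the residual recursion \eqref{resi} and dividing by the appropriate power of $(1-\lambda)$, the powers of $(1-\lambda)$ collapse to $(1-\lambda)^{1/2}$ in the $h_k$-term and to $(1-\lambda)^0$ in the $h_{k-1}$-term. Writing $x=\sqrt{1-\lambda}$ this yields the clean recursion
\begin{align*}
 h_{k+1}(\lambda) &= (1+\alpha_k)\, x\, h_k(\lambda) - \alpha_k\, h_{k-1}(\lambda), \qquad k\geq 1,
\end{align*}
with $h_0=1$ and $h_1=x$. (Note that the definite parity of any sequence satisfying \eqref{genrec} guarantees that the square roots cancel, so that $r_k$ is a genuine polynomial in $\lambda$ of degree $k$.)

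Next I would normalize the given polynomials, setting $\tilde h_k(x) := P_k(x)/P_k(1)$, and compute the recursion they satisfy. Introducing the ratio $\rho_k := P_k(1)/P_{k-1}(1)$ (with $\rho_1=c_0$), evaluating \eqref{genrec} at $x=1$ gives $\rho_{k+1} = c_k - d_k/\rho_k$, and dividing \eqref{genrec} by $P_{k+1}(1)$ turns it into
\begin{align*}
 \tilde h_{k+1}(x) &= \frac{c_k}{\rho_{k+1}}\, x\, \tilde h_k(x) - \frac{d_k}{\rho_{k+1}\rho_k}\, \tilde h_{k-1}(x).
\end{align*}
Since $\tilde h_0 = 1 = h_0$ and $\tilde h_1 = x = h_1$ (as $P_1(x)=c_0 x$), the representation \eqref{genrep} will follow by induction provided the two matching identities $c_k/\rho_{k+1} = 1+\alpha_k$ and $d_k/(\rho_{k+1}\rho_k) = \alpha_k$ hold. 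The crux is therefore to show that these are equivalent to \eqref{cda}. I would do this by an auxiliary induction establishing $\rho_{k+1} = c_k/(1+\alpha_k)$: inserting this ansatz into $\rho_{k+1} = c_k - d_k/\rho_k$ and eliminating $d_k$ through \eqref{cda} reproduces exactly $\rho_{k+1} = c_k/(1+\alpha_k)$, closing the induction; the second identity then drops out after substituting both $\rho_{k+1}$ and $\rho_k$ and simplifying $(1+\alpha_k)/(1+\alpha_k^{-1}) = \alpha_k$. As $\alpha_k>0$ lets one choose $c_k>0$ freely and hence $d_k>0$ from \eqref{cda}, Favard's theorem certifies that the $P_k$ are genuine orthogonal polynomials.

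For the converse I would simply read \eqref{cda} as a definition of $\alpha_k$ from the given $c_n,d_n$, namely $\alpha_1 = d_1/(c_1c_0-d_1)$ and recursively $\alpha_k$ from $\frac{c_kc_{k-1}}{d_k} = (1+\alpha_k^{-1})(1+\alpha_{k-1})$; feeding this back into the forward computation reproduces \eqref{genrep}. The main obstacle I anticipate is not the algebra but the well-definedness and positivity of this $\alpha_k$: solving \eqref{cda} for $\alpha_k$ requires $\frac{c_k\rho_k}{d_k}>1$, and since $P_{k+1}(1) = c_kP_k(1) - d_kP_{k-1}(1)$ this is precisely the statement $P_{k+1}(1)>0$. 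Hence the argument needs (and I would make explicit) that $1$ lies beyond the largest zero of every $P_k$, equivalently that the orthogonality measure is supported in $[-1,1]$ — exactly the regime relevant to \eqref{Nfirst}, where $x=\sqrt{1-\lambda}\in[0,1]$ on the spectrum $A^*A\subseteq[0,1]$. Under this positivity the induction $P_k(1)>0$ yields $\alpha_k>0$, and both directions close.
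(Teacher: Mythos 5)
Your proof is correct and follows essentially the same route as the paper: the substitution $h_k(\lambda)=r_k(\lambda)(1-\lambda)^{-k/2}$, normalization of the $P_k$ at $x=1$, and an induction on the scalar ratio $\rho_k=P_k(1)/P_{k-1}(1)$, which is exactly the paper's quantity $\theta_k=\frac{c_kP_k(1)}{d_kP_{k-1}(1)}$ up to the factor $c_k/d_k$, with your identities $c_k/\rho_{k+1}=1+\alpha_k$ and $d_k/(\rho_{k+1}\rho_k)=\alpha_k$ matching the paper's $1-\theta_k^{-1}=(1+\alpha_k)^{-1}$ and $\theta_k-1=\alpha_k^{-1}$. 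Your supplementary remarks (the parity argument for polynomiality of $r_k$, Favard's theorem for genuine orthogonality, and the well-definedness and positivity of $\alpha_k$ in the converse via $P_k(1)>0$, i.e.\ $1$ lying beyond the largest zero) tighten points the paper's proof passes over silently, but they do not alter the underlying argument.
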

\begin{proof} 
The function $h_k(\lambda):= r_k(\lambda) (1-\lambda)^\frac{k}{2}$ satisfies 
the recursion \eqref{hrec} with $h_0(\lambda) = 1$ and $h_1(\lambda) = \sqrt{1-\lambda}$
and for $k\geq 1$.  
As in the proof of Theorem~\ref{thone}, we may conclude that \eqref{genrec} 
leads to a similar  recursion as \eqref{scaledrec}: 
\[ \frac{P_{k+1}(x)}{P_{k+1}(1)} = 
\frac{P_{k}(x)}{P_{k}(1)} \frac{1}{1 -\theta_k^{-1}} -
\frac{P_{k-1}(x)}{P_{k-1}(1)}\frac{1}{\theta_k-1}, 
\qquad k \geq 1. \] 
with 
\[ \theta_k = \frac{c_k  P_{k}(1)}{d_k  P_{k-1}(1) }, \qquad k \geq 1. \]
From  \eqref{genrec} we can conclude by some algebraic manipulations that 
\[ \theta_{k} = \frac{c_k c_{k-1}}{d_k}\left( 1- \theta_{k-1}^{-1} \right), \qquad k\geq 2.  \] 
If \eqref{cda} holds, then 
from the recursion 
for $\theta_k$, it  follows that we can perform an induction step 
following that 
 $\frac{1}{\theta_{k-1} -1} = \alpha_{k-1}$ implies 
$\frac{1}{\theta_{k} -1} = \alpha_{k}$. Since  
 $\frac{1}{\theta_{1} -1} = \alpha_{1}$ by definition, we obtain that 
 $h_k(\lambda)$ and $\frac{P_{k}(x)}{P_{k}(1)}$ satisfy identical recursions 
 and have identical initial conditions with the  setting $x = \sqrt{1-\lambda}$.
 
Conversely, if  \eqref{genrec} is given and the sequence $\alpha_k$ is recursively  defined 
by~\eqref{cda}, then it follows in a similar manner  that $\frac{P_{k}(x)}{P_{k}(1)}$ has the same 
recursion and initial conditions as $h_k(\lambda)$ and thus both functions agree. 
\end{proof} 
The polynomials $P_k(x)$ in this theorem correspond to 
$x C_{k-1}^{\frac{\beta+1}{2}}(x)$ in Theorem~\ref{thone}.

As an illustration, we may consider the peculiar choice of $\alpha_k$ 
in Nesterov's original paper~\cite{Ne83}, which is also used in the well-known 
FISTA iteration \cite{BeTe09}: First, a sequence is defined recursively, 
\[ t_{k+1} = \frac{1}{2}\left( 1 + \sqrt{1 + 4 t_k^2} \right), \qquad t_1 = 1,\]
and then the sequence $\alpha_k$ is given by 
\[ \alpha_k = \frac{t_k-1}{t_{k+1}}. \]
Note that  $t_{k+1}$ is the positive root of the equation
$ t_{k+1}(t_{k+1}-1) = t_{k}^2.$  Using this identity, we may calculate that 
\[ 
(1 + \frac{1}{\alpha_k})(\alpha_{k-1} +1) = 
\frac{t_k}{t_{k-1}} (1 + \frac{t_k}{t_{k+1}}) ( 
1 +\frac{t_{k-1}}{t_{k}}). \] 
Thus, coefficients for a recurrence formula for orthogonal polynomials 
 that correspond to such an 
iteration are 
\[ c_k = 1 + \frac{t_k}{t_{k+1}},  \qquad d_k = c_{k-1} -1. \] 
However, this does not seem to be related to any common polynomial family,
 to the knowledge of the 
author. 

On the other hand, we may design Nesterov iterations from the recurrence 
relation of classical polynomials. For instance, the Hermite polynomials 
obey a relation \eqref{genrec}   with $c_k = 2,$ $d_k = 2k$. 
Thus, the sequence $\alpha_k$ has to satisfy the recursion 
\[  a_{k}:= \frac{1 + a_{k-1}}{\frac{2}{k} - a_{k-1} - 1}.  \] 
We do not know if this is of any use, though.

\section{Convergence analysis}
We consider the iteration \eqref{Nfirst} with the usual $\alpha_k$-sequence \eqref{achoice} and show that 
it is an optimal-order regularization methods (of course, when combined with a stopping rule). 

\subsection{Convergence rates and semi-saturation}\label{sec:semisat}
In the classical analysis of regularization schemes \cite{EHN96}, one tries 
to bound the error in terms of the noiselevel $\delta$: $\|\xkd{k(\delta)} -\xd\| \leq f(\delta)$, 
where $f$ is some function decreasing to $0$ with $\delta \to 0$.
Often,  H\"older-type 
rates are considered with $f(\delta) = C \delta^\xi$. For such estimates, one has to 
impose smoothness conditions in form of a source condition 
\begin{align}\label{source}  
\xd = (A^*A)^\mu \omega,   \qquad \|\omega\|< \infty,\quad  \mu >0.  
\end{align} 
It is also well-known \cite{EHN96} that the optimal rate of convergence under \eqref{source} 
is of the form 
\[ \|\xkd{k(\delta)} -\xd\|  \leq O(\delta^{\frac{2\mu}{2\mu +1}}), \]
and a regularization scheme that achieves this bound is called of optimal order. 

The phenomenon of saturation is the effect that for certain regularization method, 
the convergence rate $f(\delta)$ does not improve even when the smoothness is 
higher, i.e., $\mu$ is larger. This happens, for instance for 
Tikhonov regularization at $\mu = 1$ or for the $\nu$-methods at $\mu = \nu$; see~\cite{EHN96}. 
 
For the Nesterov iteration \eqref{Nfirst}, a detailed analysis  
has been performed by Neubauer \cite{Ne17} with the result 
that, assuming  a usual source condition \eqref{source} 
and an appropriate a priori stopping rule, the resulting iterative regularization 
scheme is of optimal order for $\mu \leq \frac{1}{2}$, and, 
for $\mu > \frac{1}{2}$, the convergence rates improve with $\mu$ but in 
a suboptimal way. 
More precisely, the convergence rates proven in \cite{Ne17} are 
\[ \|\xkd{k(\delta)} -\xd \| =  \begin{cases} O(\delta^\frac{2\mu}{2\mu+1}) & \mu \leq \frac{1}{2}, 
\\ 
O(\delta^\frac{2\mu+1}{2\mu+3}) & \mu > \frac{1}{2}.  \end{cases}
\]  
Thus, contrary to saturating methods, the order still improves beyond the "saturation 
index" $\mu = \frac{1}{2}$ but in a suboptimal way. This is what we call "semi-saturation", 
and, to the knowledge of the author, 
 this has not been observed yet for a classical regularization method.   
A further result of \cite{Ne17} is that  using  the discrepancy principle as 
stopping rule, convergence rates are proven, which are, however, always suboptimal.

Our second main contribution is  an improvement of Neubauer's result in the 
sense that we show that the Nesterov iteration is of optimal order 
for a smoothness index $\mu \leq \frac{\beta +1}{4}$ with an a priori stopping rule.
 Moreover, contrary to 
\cite{Ne17}, we also obtain optimal-order rates with the discrepancy principle 
provided  that $\mu \leq \frac{\beta -1}{4}$. 
These findings allows one to achieve always optimal-order 
convergence provided $\beta$ is chosen sufficiently large.

Moreover, the phenomenon of semi-saturation is made transparent by 
referring to the representation in Theorem~\ref{thone}: 
The residual is a product of Landweber-type and $\nu$-type residuals, 
and keeping in mind that Landweber iteration does not show 
saturation 
for H\"older indices while the $\nu$-method do, it is clear that a 
product as in \eqref{resibesi} leads to the above described semi-saturation.

\subsection{Convergence analysis} 
In this section we perform a convergence analysis for the iteration~\eqref{Nfirst}. 
By Theorem~\ref{thone}, we may base our investigation on the known 
results for Landweber iteration and the $\nu$-methods.

We collect some useful known estimates:
\begin{equation}\label{Gegbound}
\left|\frac{C_{k-1}^{(\frac{\beta +1}{2})}(\sqrt{1-\lambda})}{C_{k-1}^{(
\frac{\beta +1}{2})}(1)} \right| \leq 1,  \qquad 0 \leq \lambda \leq 1, \beta >-1.  
\end{equation} 
This is well-known and follows from \cite[Eq. (7.33.1), (4.73)]{Sz75}.
From this we immediately obtain that 
\begin{equation}\label{resbound}  |r_k(\lambda)| \leq 1, \qquad  0 \leq \lambda <1, \quad \beta >-1, 
 \end{equation}
which has  already been shown in \cite{Ne17}. 
Moreover, we may conclude from \eqref{Gegbound} and \eqref{resibesi}
as well that 
\begin{equation}\label{rescon} 
 \lim_{k\to \infty} r_k(\lambda) \to 0, \qquad  0 < \lambda <1.
 \end{equation}

%

Recall that  we denote by $\xk{k}$ the iteration 
with $\yd$ replaced by the exact data. As usual, this allows one to split 
the total error into an approximation and stability term. 
We estimate the stability term: 
\begin{proposition}
Let  $\|A^*A\| \leq 1$  and define  $\xkd{k}$ by  
\eqref{Nfirst} \eqref{achoice} with $\beta >-1$. 
Let  
$\xk{k}$ be the corresponding noise-free iteration with $\yd$ replaced by $y = A \xd$. 
Then we have the estimate 
\begin{equation}\label{estdata}
\| \xkd{k} - \xk{k}\| \leq \sqrt{2} \sqrt{(k-1)^2 + \frac{k+1}{2}} \delta \leq  
\sqrt{2} k \delta. 
\end{equation} 
\end{proposition}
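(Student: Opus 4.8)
The plan is to exploit the spectral-filter structure: since $\xkd{k}=g_k(A^*A)A^*\yd$ with $g_k(\lambda)=\frac{1-r_k(\lambda)}{\lambda}$, the difference $\xkd{k}-\xk{k}=g_k(A^*A)A^*(\yd-y)$ is controlled by $\|g_k(A^*A)A^*\|\,\delta$. By the standard spectral-calculus reduction (diagonalizing $A$ via its singular system, cf.~\cite{EHN96}), one has $\|g_k(A^*A)A^*\|=\sup_{0\le\lambda\le 1}\sqrt{\lambda}\,|g_k(\lambda)|$. Thus the whole proposition reduces to the scalar estimate
\begin{equation}\label{scalargoal}
\sup_{0\le\lambda\le 1}\sqrt{\lambda}\,\left|\frac{1-r_k(\lambda)}{\lambda}\right|
=\sup_{0\le\lambda\le 1}\frac{|1-r_k(\lambda)|}{\sqrt{\lambda}}
\le \sqrt{2}\sqrt{(k-1)^2+\tfrac{k+1}{2}}.
\end{equation}
So I would first state this reduction, then spend the rest of the proof bounding the right-hand quantity using the explicit representation \eqref{resibesi}.

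For the scalar bound I would write $1-r_k(\lambda)=\lambda\,g_k(\lambda)$ and aim to control $g_k$ directly, or equivalently bound $\frac{1-r_k(\lambda)}{\sqrt\lambda}$. The natural route is to use $r_k(0)=1$ (which holds since $\frac{C_{k-1}^{(\frac{\beta+1}{2})}(1)}{C_{k-1}^{(\frac{\beta+1}{2})}(1)}=1$) together with a derivative/mean-value argument, or to use the bound $|1-r_k(\lambda)|\le \sqrt{\lambda}\,\|g_k\|_\infty^{1/2}\cdots$—but the cleanest approach is to estimate $\sup_\lambda|g_k(\lambda)|$ via the filter bound $|g_k(\lambda)|\le \frac{|1-r_k(\lambda)|}{\lambda}$ and then invoke the known growth of the derivative of $r_k$ at the endpoints. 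Concretely, since $r_k(\lambda)=(1-\lambda)^{\frac{k+1}{2}}h_k(\lambda)$ with $h_k$ the scaled Gegenbauer ratio satisfying $|h_k|\le 1$ on $[0,1]$ by \eqref{Gegbound}, I would split
\begin{equation*}
1-r_k(\lambda)=\bigl(1-(1-\lambda)^{\frac{k+1}{2}}\bigr)+(1-\lambda)^{\frac{k+1}{2}}\bigl(1-h_k(\lambda)\bigr),
\end{equation*}
and bound each term after dividing by $\sqrt{\lambda}$: the first via $\frac{1-(1-\lambda)^{(k+1)/2}}{\sqrt\lambda}\le$ (a multiple of $\sqrt{(k+1)/2}$ near $0$, bounded near $1$), controlled using $1-(1-\lambda)^m\le m\lambda$ and $1-(1-\lambda)^m\le 1$; the second via a Gegenbauer estimate on $\frac{1-h_k(\lambda)}{\sqrt\lambda}$, which is the derivative-type growth $\sim (k-1)$ of the normalized Gegenbauer polynomial near its maximum point $x=1$ (i.e., using $|C_{k-1}^{(\alpha)\prime}(1)/C_{k-1}^{(\alpha)}(1)|$ and the change of variables $x=\sqrt{1-\lambda}$). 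Combining the $O(k-1)$ term from the $\nu$-method (Gegenbauer) factor with the $O(\sqrt{k})$ Landweber-type factor and squaring is what produces the $(k-1)^2+\frac{k+1}{2}$ under the root.

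The main obstacle will be the second term, namely obtaining a sharp, fully explicit bound on $\frac{1-h_k(\lambda)}{\sqrt\lambda}$ with the exact constant that yields $(k-1)^2$ rather than merely $O(k^2)$. This requires the precise value (or a tight estimate) of the logarithmic derivative of the Gegenbauer polynomial at the boundary, $\frac{d}{dx}\log C_{k-1}^{(\frac{\beta+1}{2})}(x)\big|_{x=1}=\frac{(k-1)(k+\beta-1)}{\beta+1}$ (from the standard formula for $C_n^{(\alpha)\prime}(1)$ in \cite{AbSt64,Sz75}), converted through $x=\sqrt{1-\lambda}$ where $\frac{dx}{d\lambda}=-\frac{1}{2\sqrt{1-\lambda}}$, so that $\frac{1-h_k}{\sqrt\lambda}$ near $\lambda=0$ behaves like $\tfrac12\cdot\frac{(k-1)(k+\beta-1)}{\beta+1}\cdot\sqrt\lambda\cdot(\text{bounded})$, and one must check this bound persists uniformly on all of $(0,1]$, not just asymptotically. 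I expect the endpoint $\lambda\to 1$ (where $\sqrt{1-\lambda}\to 0$ and the Gegenbauer argument approaches the origin) to need separate care, and the bookkeeping of the $\sqrt{2}$ prefactor and the cross term from squaring $(k-1)+\sqrt{(k+1)/2}$-type contributions to be the delicate part; the rest is routine once the Gegenbauer derivative estimate is in hand.
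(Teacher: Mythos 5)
Your opening reduction is fine, but it is not the paper's route, and the core of your plan has a genuine gap: you reduce to the weighted bound $\sup_\lambda \sqrt{\lambda}\,|g_k(\lambda)|\le\sqrt{2}\sqrt{(k-1)^2+\frac{k+1}{2}}$ and then must prove it by hand, whereas the paper never estimates this weighted quantity directly. It instead uses the interpolation step packaged in \cite[Theorem~4.2]{EHN96}: since $\lambda g_k(\lambda)=1-r_k(\lambda)$ and $|r_k|\le 1$ by \eqref{resbound}, one has $\lambda g_k(\lambda)^2=\bigl(\lambda g_k(\lambda)\bigr)g_k(\lambda)\le 2\sup_\lambda|g_k(\lambda)|$, so only an \emph{unweighted} bound on $g_k$ is needed. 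That bound is then immediate: by the mean value theorem $g_k(\lambda)=\frac{r_k(0)-r_k(\lambda)}{\lambda}=-r_k'(\tilde\lambda)$, the product rule applied to \eqref{resibesi} splits $r_k'$ into a Landweber-type term bounded by $\frac{k+1}{2}$ via \eqref{Gegbound} and a Gegenbauer-derivative term bounded by Markov's inequality (\cite[Eq.~(6.16)]{EHN96}) by $2(k-1)^2$, halved by the prefactor $\frac12$; hence $\sup|g_k|\le (k-1)^2+\frac{k+1}{2}$, and both the $\sqrt{2}$ and the exact expression under the root drop out at once. Crucially, Markov's inequality is uniform in $\beta$, which is what makes \eqref{estdata} hold with a $\beta$-independent constant.

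Your substitute for this mechanism does not close. First, the logarithmic derivative you quote is wrong: from $C_n^{(\alpha)\prime}=2\alpha C_{n-1}^{(\alpha+1)}$ and the values at $1$ one gets $C_{k-1}^{(\alpha)\prime}(1)/C_{k-1}^{(\alpha)}(1)=\frac{(k-1)(k+\beta)}{\beta+2}$ (with $\alpha=\frac{\beta+1}{2}$), not $\frac{(k-1)(k+\beta-1)}{\beta+1}$. Second, even granting the sharp endpoint inequality $1-h_k\le \frac{(k-1)(k+\beta)}{\beta+2}\,(1-x)$ with $1-x\le\lambda$, the best your term-by-term scheme yields is
\begin{equation*}
\sup_\lambda\frac{1-h_k(\lambda)}{\sqrt{\lambda}}\le\sqrt{\tfrac{2(k-1)(k+\beta)}{\beta+2}},
\qquad
\sup_\lambda\frac{1-(1-\lambda)^{\frac{k+1}{2}}}{\sqrt{\lambda}}\le\sqrt{\tfrac{k+1}{2}},
\end{equation*}
and adding suprema attained at different $\lambda$ produces a cross term after squaring that the claimed constant does not accommodate. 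The failure is not merely cosmetic: as $\beta\to-1$ the first bound behaves like $\sqrt{2}\,(k-1)$, and $\sqrt{2}\,(k-1)+\sqrt{(k+1)/2}>\sqrt{2}\,k$ for $k\ge 4$, so your route, as sketched, does not even recover the weaker bound $\sqrt{2}\,k\,\delta$ uniformly in $\beta>-1$, let alone \eqref{estdata}. Together with the unresolved $\lambda\to 1$ endpoint (where your change of variables $x=\sqrt{1-\lambda}$ makes derivative-based arguments singular) this leaves the central scalar estimate unproven; the missing idea is precisely the $|1-r_k|\le 2$ interpolation plus Markov's inequality, which sidesteps all of these difficulties.
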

\begin{proof}
Following \cite{EHN96}, it is enough to estimate 
\[ g_k(\lambda) = \frac{1 -r_k(\lambda)}{\lambda} = r_k'(\tilde{\lambda}),\]
where we used the mean value theorem with $\tilde{\lambda} \in (0,\lambda)$. 
The derivative  may be calculated from \eqref{resibesi} as 
\begin{align*}
r_k'(\lambda) &= \frac{k+1}{2} (1-\lambda)^{\frac{k-1}{2}} 
  \frac{C_{k-1}^{(\frac{\beta +1}{2})}(\sqrt{1-\lambda})}{C_{k-1}^{(
\frac{\beta +1}{2})}(1)}  - \frac{1}{2} (1-\lambda)^{\frac{k}{2}} 
  \left(\frac{[C_{k-1}^{(\frac{\beta +1}{2})}](\sqrt{1-\lambda})}{C_{k-1}^{(
\frac{\beta +1}{2})}(1)}\right)'. 
\end{align*} 
We use Markov's inequality (cf.~\cite[Eq.~(6.16)]{EHN96}) and \eqref{Gegbound} to conclude that 
\[  \left|\left(\frac{[C_{k-1}^{(\frac{\beta +1}{2})}](\sqrt{1-\lambda})}{C_{k-1}^{(
\frac{\beta +1}{2})}(1)}\right)'|\right| \leq 
2 (k-1)^2 \max_{0\leq \lambda \leq 1}
\left| \frac{C_{k-1}^{(\frac{\beta +1}{2})}(\sqrt{1-\lambda})}{C_{k-1}^{(\frac{\beta +1}{2})}(1)}
\right| \leq 
2 (k-1)^2. 
\]
Thus,
\[ |g_k(\lambda)|  \leq \frac{k+1}{2}  + (k-1)^2. \]
The result now follows with \cite[Theorem~4.2]{EHN96} and \eqref{resbound}. 
\end{proof} 
Note that this estimate is a slight improvement compared to the corresponding estimate in 
\cite[Equation (3.2)]{Ne17}, which has $2 k \delta$ on the right-hand side, 
similar as for the $\nu$-methods.

From this we may conclude convergence: 
\begin{theorem}\label{th:conv}
Let $\|A^*A\| \leq 1$ and $\beta>-1$. If the iteration is stopped at a 
stopping index $k(\delta)$ that satisfies $k(\delta) \delta \to 0$ 
and $k(\delta) \to \infty$ as $\delta \to 0$, then 
we obtain convergence 
\[ \xkd{k(\delta)} \to \xd. \]
\end{theorem}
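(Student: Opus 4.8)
The plan is to use the standard error decomposition for spectral regularization methods, splitting the total error into a stability (data-propagation) term and an approximation (bias) term:
\[
\|\xkd{k(\delta)} - \xd\| \leq \|\xkd{k(\delta)} - \xk{k(\delta)}\| + \|\xk{k(\delta)} - \xd\|.
\]
The first term is controlled immediately by the preceding Proposition: it is bounded by $\sqrt{2}\,k(\delta)\,\delta$, which tends to $0$ precisely because of the hypothesis $k(\delta)\delta \to 0$. So the stability half of the argument is essentially free.

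The work is in the approximation term $\|\xk{k} - \xd\|$ for the noise-free iteration. Here I would invoke the spectral representation: since $\xd = g_k(A^*A)A^*y$ in the noise-free case and $\xd$ is the minimum-norm solution, the standard manipulation (cf.\ \cite{EHN96}) gives
\[
\xk{k} - \xd = -\,r_k(A^*A)\,\xd
\]
on the closure of the range (modulo the usual identification via $A^*A$ versus $AA^*$, and using that the filter $g_k$ satisfies $1 - \lambda g_k(\lambda) = r_k(\lambda)$). Thus by the functional calculus,
\[
\|\xk{k} - \xd\|^2 = \int_0^1 |r_k(\lambda)|^2 \, d\|E_\lambda \xd\|^2,
\]
where $E_\lambda$ is the spectral family of $A^*A$. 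The goal is to show this integral goes to $0$ as $k \to \infty$, which is the second hypothesis on the stopping rule.

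The main obstacle—really the only substantive point—is justifying passage to the limit in this integral. The two ingredients are already assembled in the excerpt: the uniform bound $|r_k(\lambda)| \leq 1$ from \eqref{resbound}, and the pointwise convergence $r_k(\lambda) \to 0$ for $0 < \lambda < 1$ from \eqref{rescon}. I would therefore apply dominated convergence: the integrand $|r_k(\lambda)|^2$ is dominated by the integrable constant $1$ (integrable against the finite measure $d\|E_\lambda \xd\|^2$), and it converges pointwise to $0$ on $(0,1)$. The only delicate spot is the behavior at $\lambda = 0$, but since $r_k(0) = 1$ for all $k$, the contribution at $\lambda = 0$ is $\|E_0 \xd\|^2 = \|P_{N(A)}\xd\|^2$, which vanishes because $\xd$ is the minimum-norm solution and hence lies in $N(A)^\perp = \overline{R(A^*A)}$. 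The spectral measure thus puts no mass at $\lambda = 0$, so the set $\{0\}$ is negligible and dominated convergence yields $\|\xk{k} - \xd\| \to 0$ as $k \to \infty$. Combining the two estimates and invoking $k(\delta) \to \infty$ together with $k(\delta)\delta \to 0$ completes the proof.
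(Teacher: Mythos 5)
Your proposal is correct and follows essentially the same route as the paper: the same splitting into stability and approximation terms, with the stability term handled by the estimate \eqref{estdata} and the approximation term by dominated convergence via the uniform bound \eqref{resbound} and the pointwise convergence \eqref{rescon}. You merely spell out details the paper leaves implicit (the spectral integral, the identity $\xk{k}-\xd=-r_k(A^*A)\xd$, and the absence of spectral mass at $\lambda=0$ since $\xd\in N(A)^\perp$), which is fine; note only the small typo where you wrote $\xd = g_k(A^*A)A^*y$ instead of $\xk{k} = g_k(A^*A)A^*y$.
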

\begin{proof} 
We estimate 
\begin{align*} 
 \|\xkd{k(\delta)} -\xd\| & \leq 
\|\xkd{k(\delta)} -\xk{k(\delta)} \| + \|\xk{k(\delta)} -\xd\| \leq 
\sqrt{2} k(\delta) \delta + r_{k(\delta)}(A^*A)\xd. 
\end{align*} 
The first term converges to $0$ by assumption on $k(\delta)$ and the 
second term does so because $k(\delta) \to \infty$ and by the dominated 
convergence theorem using \eqref{resbound}, \eqref{rescon} as in \cite{EHN96}.
\end{proof}

We now consider  convergence rates, and for this, 
the following rather deep estimate  for  orthogonal polynomials is needed.
It was derived by  Brakhage \cite{Br87} as well as by  Hanke \cite[Appendix~A.2]{EHN96}, \cite{Ha91} on basis of 
Hilb-type estimates for Jacobi polynomials.
\begin{proposition}
Let  $\beta >-1$. 
Then there is a constant $c_\beta$ with 
\begin{equation}\label{rateeq}
\left|\lambda^\frac{\beta +1}{4} \frac{C_{k}^{(\frac{\beta +1}{2})}(\sqrt{1-\lambda})}{C_{k}^{(\frac{\beta +1}{2})}(1)}\right|
\leq c_\beta k^{-2 \frac{\beta +1}{4}}, \qquad 0 \leq \lambda \leq 1. 
\end{equation} 
\end{proposition}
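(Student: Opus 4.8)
The plan is to derive the bound \eqref{rateeq} from known Hilb-type asymptotics for Jacobi polynomials, exploiting that Gegenbauer polynomials $C_k^{(\alpha)}$ are the symmetric case $P_k^{(\alpha-1/2,\alpha-1/2)}$ of Jacobi polynomials. First I would introduce the substitution $\sqrt{1-\lambda} = \cos\theta$, so that $\sqrt{\lambda} = \sin\theta$ with $\theta \in [0,\pi/2]$, and rewrite the left-hand side as
\[
\left|(\sin\theta)^{\frac{\beta+1}{2}} \frac{C_k^{(\frac{\beta+1}{2})}(\cos\theta)}{C_k^{(\frac{\beta+1}{2})}(1)}\right|.
\]
The goal is then to show this is $O(k^{-\frac{\beta+1}{2}})$ uniformly in $\theta$, which is exactly the form in which Hilb's asymptotic formula is typically exploited.

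The key analytic ingredient is the Hilb-type estimate for Jacobi polynomials (as stated, e.g., in Szegő \cite{Sz75} and used by Hanke in \cite[Appendix~A.2]{EHN96}): for the normalized Gegenbauer polynomial there is a representation on $(0,\pi/2]$ of the form
\[
(\sin\theta)^{\frac{\beta+1}{2}} \frac{C_k^{(\frac{\beta+1}{2})}(\cos\theta)}{C_k^{(\frac{\beta+1}{2})}(1)}
= \gamma_k \, (\sin\theta)^{\frac{1}{2}} \left(\theta^{\frac{1}{2}} J_{\frac{\beta}{2}}(N\theta) + R_k(\theta)\right),
\]
where $N = k + \frac{\beta+1}{2}$, $J_\nu$ is the Bessel function of the first kind, $\gamma_k = O(k^{-\frac{\beta+1}{2}})$ carries the normalization decay, and the remainder $R_k(\theta)$ is controlled uniformly. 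I would split the range into the region $\theta \le c/k$, where the factors $(\sin\theta)^{\frac{\beta+1}{2}}$ and the smallness of the Bessel argument give the bound directly from the power $\theta^{\frac{\beta+1}{2}} \lesssim k^{-\frac{\beta+1}{2}}$, and the region $\theta > c/k$, where one uses the uniform boundedness $|t^{1/2} J_{\frac{\beta}{2}}(t)| \le C$ together with the $(\sin\theta)^{1/2}$ factor and the remainder estimate. Collecting the $O(k^{-\frac{\beta+1}{2}})$ from the normalization $\gamma_k$ then yields \eqref{rateeq}.

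The main obstacle I expect is bookkeeping the constants and the remainder term $R_k(\theta)$ uniformly across the full interval, particularly near the transition $\theta \sim 1/k$ where neither the small-argument nor the oscillatory regime of the Bessel function is cleanly dominant. Getting the endpoint $\theta \to 0$ (equivalently $\lambda \to 0$) right is delicate because both $\lambda^{\frac{\beta+1}{4}} \to 0$ and the Gegenbauer quotient $\to 1$, so the product must be shown to decay at the correct rate rather than merely being bounded. Since this is precisely the computation carried out by Brakhage \cite{Br87} and Hanke \cite{Ha91}, \cite[Appendix~A.2]{EHN96} for the $\nu$-methods (where the exponent $\frac{\beta+1}{4}$ plays the role of $\nu$), I would cite their Hilb-type analysis for the technical remainder bounds and reproduce only the substitution and the case split that reduce \eqref{rateeq} to their estimate, rather than re-deriving the asymptotic expansion from scratch.
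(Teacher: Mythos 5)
Your proposal is correct and follows essentially the same route as the paper: the paper's proof is itself only a citation of the Hilb-type analysis of Brakhage \cite{Br87} and Hanke \cite[Appendix~A.2]{EHN96}, \cite{Ha91} (with the remark that the Hilb formula \cite[Theorems~8.21.12, 8.21.13]{Sz75} holds for all degrees, not just the even ones covered by \cite[Eq.~(6.22)]{EHN96}), and your substitution $\sqrt{1-\lambda}=\cos\theta$, the case split at $\theta \sim 1/k$, and the uniform bound $|t^{1/2}J_{\beta/2}(t)| \leq C$ unpack exactly that cited computation. One bookkeeping slip worth noting but harmless to your argument, since you defer the precise constants to the references: the normalized Hilb prefactor decays only like $k^{-\beta/2}$, and the missing $k^{-1/2}$ is supplied by the Bessel factor itself via $\theta^{1/2}|J_{\beta/2}(N\theta)| \leq C N^{-1/2}$, rather than by $\gamma_k = O(k^{-\frac{\beta+1}{2}})$ alone.
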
 
\begin{proof}
For $k$ even, this is \cite[Eq.~(6.22)]{EHN96} (with $k$ there meaning $2k$ here), 
or \cite[Th.~4.1]{Ha91}. 
However, the result there is based on the Hilb-type formula
(\cite[Theorems~8.21.12, 8.21.13]{Sz75} which holds 
for all $k$ as in \cite[p.~170]{Br87}. Thus, by following the steps in \cite[Appendix~A.2]{EHN96},
the result is obtained. 
\end{proof} 
Note that in case $-1<\beta<1$, the constant $c_\beta$ may be explicitly calculated from 
\cite[Eq.~(7.33.5)]{Sz75}.

The corresponding estimates for the residuals of Landweber iteration
 are standard; cf.~\cite[Eq.~(6.8)]{EHN96}:
\begin{equation}\label{lwrates}
|\lambda^\mu   (1-\lambda)^k| \leq c_{\mu} (k+1)^{-\mu}. 
\end{equation}

As a consequence, we may state our main convergence rate result for an a priori stopping rule:
\begin{theorem}\label{th:six}
Let $\|A^*A\| \leq 1$ and $\beta>-1$, and suppose that a source condition 
\eqref{source} is satisfied with some $\mu >0$. 

\begin{enumerate}
\item If $\mu \leq \frac{\beta +1}{4}$ and the stopping index is chosen as 
\[ k(\delta) =  O(\delta^{-\frac{1}{2\mu+1}}), \]
then we obtain optimal order convergence 
\begin{align*} 
 \|\xkd{k(\delta)} -\xd\| \leq O(\delta^{\frac{2\mu}{2\mu+1}}). 
 \end{align*} 
\item  If $\mu > \frac{\beta +1}{4}$ and the stopping index is 
chosen as 
\begin{equation}\label{stopor}
 k(\delta) =  O(\delta^{-\frac{1}{\mu + \frac{\beta +1}{4}+1}}), 
 \end{equation} 
then we obtain suboptimal order convergence 
 \begin{align*} 
 \|\xkd{k(\delta)} -\xd\| \leq O(\delta^{\frac{\mu+\frac{\beta +1}{4}}{\mu+ \frac{\beta +1}{4}+1} }). 
 \end{align*}
\end{enumerate} 
\end{theorem}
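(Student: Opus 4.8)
The plan is to split the total error into an approximation term and a stability term, exactly as in the proof of Theorem~\ref{th:conv}, and then balance the two contributions by the choice of stopping index. Writing
\[
\|\xkd{k(\delta)} -\xd\| \leq \|\xkd{k(\delta)} -\xk{k(\delta)}\| + \|\xk{k(\delta)} -\xd\|,
\]
the stability term is controlled by the Proposition yielding \eqref{estdata}, giving a bound of order $k(\delta)\,\delta$. The whole content of the theorem therefore lies in estimating the approximation (bias) term $\|\xk{k} -\xd\| = \|r_k(A^*A)\,\xd\|$ under the source condition \eqref{source}, which by spectral calculus reduces to bounding $\sup_{0\le\lambda\le 1}|\lambda^\mu r_k(\lambda)|$.

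The key idea is to exploit the factorization in Theorem~\ref{thone}: $r_k(\lambda) = (1-\lambda)^{\frac{k+1}{2}} \frac{C_{k-1}^{(\frac{\beta+1}{2})}(\sqrt{1-\lambda})}{C_{k-1}^{(\frac{\beta+1}{2})}(1)}$ is a product of a Landweber-type factor and a $\nu$-type factor with $\nu = \frac{\beta+1}{4}$. First I would split the smoothness exponent as $\lambda^\mu = \lambda^{\mu_1}\lambda^{\mu_2}$ and distribute it across the two factors so that each piece is estimated by the tool tailored to it: the Gegenbauer estimate \eqref{rateeq} supplies a decay of order $k^{-2\nu} = k^{-\frac{\beta+1}{2}}$ when it absorbs $\lambda^{\frac{\beta+1}{4}}$, while the Landweber estimate \eqref{lwrates} gives decay $k^{-\mu_1}$ when the factor $(1-\lambda)^{\frac{k+1}{2}}$ absorbs $\lambda^{\mu_1}$. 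In case (1), where $\mu \le \frac{\beta+1}{4}$, the plan is to let the Landweber factor carry the full weight $\lambda^\mu$ (producing decay $k^{-\mu}$) and bound the Gegenbauer factor trivially by $1$ via \eqref{Gegbound}, giving $\|r_k(A^*A)\xd\| \le O(k^{-\mu})$; balancing $k^{-\mu} \sim k\delta$ yields $k(\delta) \sim \delta^{-\frac{1}{2\mu+1}}$ and the optimal rate $\delta^{\frac{2\mu}{2\mu+1}}$. In case (2), where $\mu > \frac{\beta+1}{4}$, I would assign exactly $\lambda^{\frac{\beta+1}{4}}$ to the Gegenbauer factor (the maximal amount \eqref{rateeq} can handle, giving $k^{-\frac{\beta+1}{2}}$) and the remaining $\lambda^{\mu - \frac{\beta+1}{4}}$ to the Landweber factor (giving $k^{-(\mu-\frac{\beta+1}{4})}$), so that the product decays like $k^{-(\mu+\frac{\beta+1}{4})}$; balancing against $k\delta$ produces the stopping index \eqref{stopor} and the suboptimal rate $\delta^{\frac{\mu+\frac{\beta+1}{4}}{\mu+\frac{\beta+1}{4}+1}}$.

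The main obstacle I anticipate is the bookkeeping needed to apply \eqref{rateeq} cleanly, since that estimate is stated with index $k$ whereas the residual involves $C_{k-1}$; one must track the index shift and confirm that $(k-1)^{-\frac{\beta+1}{2}}$ and $k^{-\frac{\beta+1}{2}}$ differ only by a constant factor absorbed into the generic $O(\cdot)$. A related subtlety is that \eqref{rateeq} carries the factor $\lambda^{\frac{\beta+1}{4}}$, not $(1-\lambda)^{\frac{\beta+1}{4}}$, so the exponent splitting must be arranged so that the power of $\lambda$ matched to the Gegenbauer factor is precisely $\frac{\beta+1}{4}$ and no power of $(1-\lambda)$ is diverted from the Landweber factor; keeping these two exponent ledgers separate is where the argument could go wrong if done carelessly. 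Once the factorized spectral bound $\sup_\lambda|\lambda^\mu r_k(\lambda)| \le c\, k^{-\min(\mu,\,\mu+\frac{\beta+1}{4})}$ is established in both regimes, the balancing of stability and approximation and the verification that the chosen $k(\delta)$ satisfies $k(\delta)\delta \to 0$ and $k(\delta)\to\infty$ (so Theorem~\ref{th:conv} applies and convergence holds) are routine.
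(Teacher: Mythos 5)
Your overall architecture (error splitting via \eqref{estdata}, reduction to $\sup_\lambda|\lambda^\mu r_k(\lambda)|$, exponent bookkeeping across the Landweber/Gegenbauer factorization) is the paper's, and your case (2) is exactly the paper's argument: $\lambda^{\frac{\beta+1}{4}}$ to the Gegenbauer factor via \eqref{rateeq}, the remainder $\lambda^{\mu-\frac{\beta+1}{4}}$ to $(1-\lambda)^{\frac{k+1}{2}}$ via \eqref{lwrates}, total decay $k^{-(\mu+\frac{\beta+1}{4})}$, then balance against $k\delta$.

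But your case (1) contains a genuine gap. Assigning the full weight $\lambda^\mu$ to the Landweber factor and bounding the Gegenbauer quotient by $1$ gives only $|\lambda^\mu r_k(\lambda)| \le C k^{-\mu}$, and balancing $k^{-\mu} \sim k\delta$ yields $k \sim \delta^{-\frac{1}{\mu+1}}$ and the rate $\delta^{\frac{\mu}{\mu+1}}$ --- \emph{not} $k \sim \delta^{-\frac{1}{2\mu+1}}$ and $\delta^{\frac{2\mu}{2\mu+1}}$ as you wrote; that step is an algebra slip, and the resulting rate is strictly suboptimal for every $\mu>0$. The structural reason your assignment cannot work is that Nesterov's stability term grows like $k\delta$ (Proposition giving \eqref{estdata}), not like $\sqrt{k}\,\delta$ as for plain Landweber; optimality therefore requires the approximation term to decay like $k^{-2\mu}$, i.e.\ at the \emph{accelerated} rate, and the Landweber factor alone can never supply this. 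The missing idea, which is what the paper uses, is interpolation on the Gegenbauer factor: combining the endpoint bounds \eqref{Gegbound} (at exponent $0$) and \eqref{rateeq} (at exponent $\frac{\beta+1}{4}$, decay $k^{-2\cdot\frac{\beta+1}{4}}$) gives
\begin{equation*}
\left|\lambda^{\mu}\, \frac{C_{k-1}^{(\frac{\beta +1}{2})}(\sqrt{1-\lambda})}{C_{k-1}^{(\frac{\beta +1}{2})}(1)}\right| \leq C\, k^{-2\mu}, \qquad 0 \le \mu \le \tfrac{\beta+1}{4},
\end{equation*}
after which one bounds $(1-\lambda)^{\frac{k+1}{2}} \le 1$ and balances $k^{-2\mu} \sim k\delta$ to get the stated $k(\delta)$ and optimal rate. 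Your closing summary bound $k^{-\min(\mu,\,\mu+\frac{\beta+1}{4})}$ (which collapses to $k^{-\mu}$ since $\frac{\beta+1}{4}>0$) confirms the confusion; the correct exponent is $\min\left(2\mu,\ \mu+\frac{\beta+1}{4}\right)$, with the two regimes switching precisely at $\mu = \frac{\beta+1}{4}$. Your index-shift concern ($C_{k-1}$ versus $C_k$ in \eqref{rateeq}) is real but harmless, exactly as you suspected.
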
 
\begin{proof}
For $\lambda \leq 1$ the estimate  \eqref{rateeq} yields (by interpolation) 
and $(1-\lambda)^\frac{k+1}{2} \leq 1$ that 
\begin{equation}\label{anest}
 |r_k(\lambda)\lambda^\mu|  \leq C k^{-2\mu},  \qquad \mu \leq \frac{\beta+1}{4}. 
 \end{equation} 
In case of $ \mu > \frac{\beta+1}{4}$, we have with additionally using \eqref{lwrates}
\begin{align*} 
 |r_k(\lambda)\lambda^\mu| &\leq 
 |(1-\lambda)^\frac{k+1}{2} \lambda^{\mu-  \frac{\beta+1}{4}}| c_\beta k^{-2    \frac{\beta+1}{4}} \\
 & \leq 
 c_{\mu,\beta} (\frac{k+1}{2})^{-(\mu-  \frac{\beta+1}{4})} | c_\beta k^{-2    \frac{\beta+1}{4}} 
\leq C k^{-\left(\mu+  \frac{\beta+1}{4} \right) }.
\end{align*} 
The result now  follows by standard means: 
\begin{align*}
 \|\xkd{k} -\xd\| &\leq 
  \|\xkd{k} -\xkd{k}\| +  \|\xkd{k} -\xd\| \leq 
  \sqrt{2} k\delta + \|r_{k}(A*A)(A^*A)^\mu \omega\| \\
  & \leq 
\begin{cases} \sqrt{2} k\delta +   C k^{-2 \mu} & \mu \leq \frac{\beta+1}{4}, \\ 
k\delta +   C k^{-(\mu +   \frac{\beta+1}{4})} & \mu > \frac{\beta+1}{4}. 
\end{cases}
\end{align*}  
Solving for $k$ by equating the two terms in the last bounds yields the a priori parameter choice 
and the corresponding rates.   
\end{proof}  
 
These results correspond to those of Neubauer when $\beta = 1$.  However, for $\beta >1$ this is
an improvement as we obtain optimal-order convergence if $\beta$ is chosen larger than $4 \mu -1$. 
We note that in the optimal order case, the number of iteration needed is of order 
$O(\delta^{-\frac{1}{2\mu+1}})$, which is the same order as for semiiterative methods and
for the conjugate gradient method. Thus, the Nesterov acceleration 
certainly  qualifies being called a fast method. 

\subsection{Discrepancy principle} 
With the improved estimates, we can as well strengthen the result of \cite{Ne17}  when
the iteration is combined with the well-known discrepancy principle. Recall that 
it defines a stopping index $k(\delta)$ a posteriori by the first (smallest) $k$ that fulfils the 
inequality 
\begin{equation}\label{discprin}
 \|A \xkd{k} - \yd\| \leq \tau \delta, \end{equation} 
where $\tau >1$ is fixed. The corresponding convergence rates can be obtained by a slight 
modification of the proof in \cite{Ne17} and the general theory in \cite{EHN96}. 

\begin{theorem}\label{th:seven}
Let $\|A^*A\| <1$, $\beta >-1$, and assume a source condition \eqref{source} satisfied.  If the iteration \eqref{Nfirst} is stopped by  the discrepancy principle \eqref{discprin}, then we 
obtain the following  convergence rates:
\begin{enumerate} 
\item If $\mu + \frac{1}{2} \leq \frac{\beta +1}{4}$, then we achieve 
optimal order convergence rates 
\begin{align*} 
 \|\xkd{k(\delta)} -\xd\| \leq O(\delta^{\frac{2\mu}{2\mu+1}}) 
 \end{align*} 
with stopping index being of the same order as in \eqref{stopor}. 
\item $\mu + \frac{1}{2} \geq \frac{\beta +1}{4}$, 
then we obtain that 
\[ k(\delta) = O(\delta^{-\frac{1}{\frac{1}{2}+ \mu + \frac{\beta+1}{4}}}) \]
and a rate of 
\begin{align*} 
 \|\xkd{k(\delta)} -\xd\| \leq O(\delta^{\frac{
 \mu + \frac{\beta+1}{4} -\frac{1}{2}  }{
 \mu + \frac{\beta+1}{4}+\frac{1}{2}}}).  
 \end{align*} 
\end{enumerate} 
\end{theorem}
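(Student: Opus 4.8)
The plan is to run the standard discrepancy-principle argument of \cite{EHN96}, splitting the total error into a propagated-noise part and an approximation part, but inserting the sharp residual estimates already available for the Nesterov iteration. Writing $\nu:=\frac{\beta+1}{4}$ for brevity, the single new ingredient needed is the decay of the \emph{exact} residual: using $A f(A^*A)=f(AA^*)A$ and $\|Av\|=\|(A^*A)^{1/2}v\|$, the source condition \eqref{source} gives $\|r_k(AA^*)y\|=\|(A^*A)^{\mu+\frac12}r_k(A^*A)\omega\|$, so applying \eqref{anest} (and, when $\mu+\frac12>\nu$, its suboptimal extension from the proof of Theorem~\ref{th:six}) with the shifted exponent $\mu+\frac12$ in place of $\mu$ yields $\|r_k(AA^*)y\|\le C k^{-(2\mu+1)}$ if $\mu+\frac12\le\nu$, and $\|r_k(AA^*)y\|\le C k^{-(\mu+\frac12+\nu)}$ otherwise.

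First I would bound the stopping index from above. Because \eqref{discprin} is violated at $k(\delta)-1$, I have $\tau\delta<\|r_{k(\delta)-1}(AA^*)\yd\|\le\|r_{k(\delta)-1}(AA^*)y\|+\delta$, using \eqref{resbound} and $\|y-\yd\|=\delta$; hence $(\tau-1)\delta<\|r_{k(\delta)-1}(AA^*)y\|$. Substituting the residual decay above and solving for $k(\delta)-1$ produces the two stopping orders: $k(\delta)=O(\delta^{-1/(2\mu+1)})$ (the optimal-order stopping) when $\mu+\frac12\le\nu$, and the order stated in the theorem, $k(\delta)=O(\delta^{-1/(\frac12+\mu+\nu)})$, otherwise. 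That the principle stops at a finite index is guaranteed by \eqref{rescon} together with $\tau>1$.

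Next I would control the approximation error $\|\xk{k(\delta)}-\xd\|=\|(A^*A)^\mu r_{k(\delta)}(A^*A)\omega\|$ by the moment (interpolation) inequality of spectral calculus, interpolating the exponent $\mu$ between $0$ and $\mu+\frac12$:
\[ \|(A^*A)^\mu r_{k(\delta)}\omega\|\le\|r_{k(\delta)}\omega\|^{\frac{1}{2\mu+1}}\,\|(A^*A)^{\mu+\frac12}r_{k(\delta)}\omega\|^{\frac{2\mu}{2\mu+1}}. \]
The first factor is at most $\|\omega\|$ by \eqref{resbound}, and the second equals $\|r_{k(\delta)}(AA^*)y\|\le(\tau+1)\delta$, since now the discrepancy principle \emph{holds} at $k(\delta)$ and $\|y-\yd\|=\delta$. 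Thus the approximation error is of optimal order $O(\delta^{2\mu/(2\mu+1)})$ for every $\beta$, and no knowledge of $k(\delta)$ is required here.

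Finally I would assemble $\|\xkd{k(\delta)}-\xd\|\le\sqrt2\,k(\delta)\delta+\|\xk{k(\delta)}-\xd\|$ via \eqref{estdata} and the stopping bounds. When $\mu+\frac12\le\nu$ the stability term $\sqrt2\,k(\delta)\delta$ is itself $O(\delta^{2\mu/(2\mu+1)})$, so both contributions are optimal and part~(1) follows. When $\mu+\frac12\ge\nu$ the stability term has order $\delta^{(\mu+\nu-\frac12)/(\mu+\nu+\frac12)}$, which is exactly the rate claimed in part~(2). The only step demanding a short check is that in this second regime the stability term really is the dominant one, i.e.\ that its exponent does not exceed the optimal approximation exponent $\frac{2\mu}{2\mu+1}$; a one-line cross-multiplication shows this inequality is equivalent to $\mu\ge\nu-\frac12$, precisely the defining condition of the regime, so the approximation term is harmless. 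I expect this balancing, rather than any single estimate, to be the only delicate point, since the entire analytic burden is already carried by \eqref{rateeq}.
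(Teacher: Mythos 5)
Your proposal is correct and follows essentially the same route as the paper: the paper's proof simply imports from Neubauer's argument the two steps you spell out explicitly, namely the moment-inequality bound $\|\xk{k(\delta)}-\xd\|\leq \|r_{k(\delta)}(A^*A)\omega\|^{\frac{1}{2\mu+1}}((\tau+1)\delta)^{\frac{2\mu}{2\mu+1}}$ (its Eq.~(4.3)) and the bound on $k(\delta)$ obtained from the violated discrepancy inequality combined with \eqref{anest} and its suboptimal extension at exponent $\mu+\frac12$, after which both arguments conclude with the stability estimate \eqref{estdata} and the same balancing check. Your derived stopping order $O(\delta^{-1/(2\mu+1)})$ in case 1 matches what the paper's proof actually yields (the theorem's cross-reference to \eqref{stopor} there appears to be a typo for the optimal-order a priori choice of Theorem~\ref{th:six}, item 1), so no gap remains.
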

\begin{proof} 
The proof  \cite[Theorem 4.1]{Ne17} only needs minor modifications. 
The estimate \cite[Eq.~(4.3)]{Ne17}
\[ \|\xk{k(\delta)} -\xd\| \leq \|r_{k(\delta)} (T^*T) w\|^\frac{1}{2\mu+1} 
\left((\tau +1) \delta\right)^\frac{2\mu}{2\mu+1} \]
is valid independent of our new rate results, hence it follows as in  \cite[Eq.~(4.4)]{Ne17}
that $\|\xk{k(\delta)} -\xd\| \leq o(\delta^\frac{2\mu}{2\mu+1} )$. It remains to 
estimate $\|\xkd{k(\delta)} -\xd\|$ by \eqref{estdata} combined with an 
upper bound for $k(\delta)$. Estimate \cite[Eq.~(4.2)]{Ne17} and the 
discrepancy principle yields 
\[ \tau \delta \leq \delta + \|(T^*T)^{\frac{1}{2}+ \mu} r_k(T^*T) w\| \] 
for $k = k(\delta)$. 
Using \eqref{anest} in case that $\mu +\frac{1}{2} \leq \frac{\beta+1}{4}$,
we obtain 
\[ (\tau-1) \delta \leq C k(\delta)^{-2 (\frac{1}{2}+ \mu)}, \] 
which yields \eqref{stopor}, and with \eqref{estdata} 
we obtain $\|\xkd{k(\delta)} -\xd\| = O(\delta^{\frac{2\mu}{2\mu+1}})$, 
which proves the result in the optimal case. 

In case that  $\mu +\frac{1}{2} > \frac{\beta+1}{4}$, the corresponding estimate 
is 
\[ (\tau-1) \delta \leq C k(\delta)^{-(\frac{1}{2}+ \mu + \frac{\beta+1}{4})}, \] 
from which the result in the second case follow. 
\end{proof} 

These rates agree with those of \cite{Ne17} when setting $\beta=1$. 
There, however, only the suboptimal case 2. was possible. 
Our improvement is to show 
that we may achieve optimal order results even with the discrepancy 
principle provided $\beta$ is sufficiently large.

\begin{remark}\label{rem2}
It is clear that in practice $\beta$ should be selected  in 
the regime of optimal rates, i.e. $\beta > 4\mu-1$ for a prior choices and 
$\beta > 4 \mu +1$ for the discrepancy principle. However, it is a rule of 
thumb to choose such parameter also as small as possible, 
or more precisely, in such a way to come close to the 
saturation point, i.e. $\beta \sim 4 \mu -1$, respectively 
$\beta \sim 4 \mu +1$. 
\end{remark}  
 
 \begin{remark}
For  semiiterative methods, a modified discrepancy principle \cite{Ha91,EHN96}
has been defined, where 
the residual in \eqref{discprin} is replaced by an expression of the form 
$(\yd,s_k(A A^*)\yd)$ with a constructed function $s_k$.
This yields an  order-optimal method as for the a priori stopping rule. 
An  adaption of  this strategy 
for Nesterov iteration is certainly possible  and this should yield 
order-optimal rates for all $\mu \leq \frac{\beta +1}{4}$. 
However, the strategy is quite involved and it is not 
completely clear to us how to include this into the iteration 
efficiently. We thus do not intend to investigate such modifications in this 
article. 
\end{remark} 
 
\section{Numerical results} 
In this section we present some small numerical experiments to illustrate 
the semi-saturation phenomenon and to investigate the performance of Nesterov's iteration, 
in particular, with respect to  the optimal-order results.  
 
In a first example we  consider a simple diagonal operator $A = {\rm diag}(\frac{1}{n^2})$, 
for $n =1,\ldots 1000$, as well as an exact solution $\xd = (\frac{1}{n^4} (-1)^n)_{n=1}^{1000}$, 
 which amounts to a source condition being satisfied with 
index $\mu = 0.75$.  Thus, we are in a case of higher smoothness, where the results of 
the present article really improve those of \cite{Ne17}. We add standard normally distributed 
Gaussian noise to the exact data and performed various iterative regularization schemes:
Landweber iteration, the $\nu$-method, and the Nesterov iteration, the latter two with 
various settings of the parameters $\nu$ and $\beta$, respectively.  

We calculated the stopping index either by the discrepancy principle \eqref{discprin} with 
$\tau = 1.01$ or, since we have the luxury of an available exact solution in this 
synthetic example, we also calculate the oracle stopping index, which is defined as 
\[ k_{opt} = {\rm argmin}_k \| \xkd{k} - \xd\|. \]
In other words, $k_{opt}$ is the theoretically optimal possible stopping index. 

\begin{figure} 
\begin{center}
\includegraphics[width=0.45\textwidth]{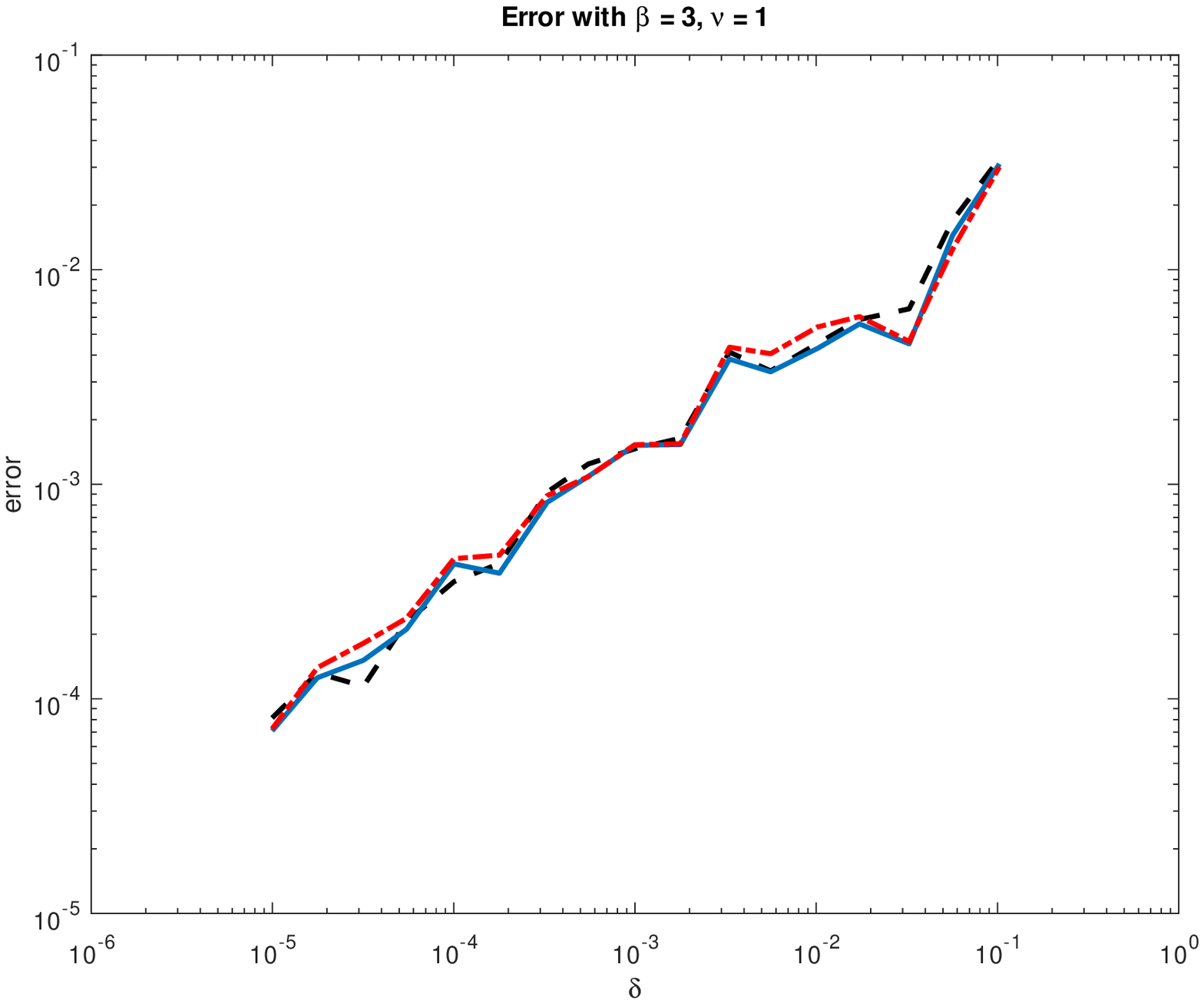} 
\includegraphics[width=0.45\textwidth]{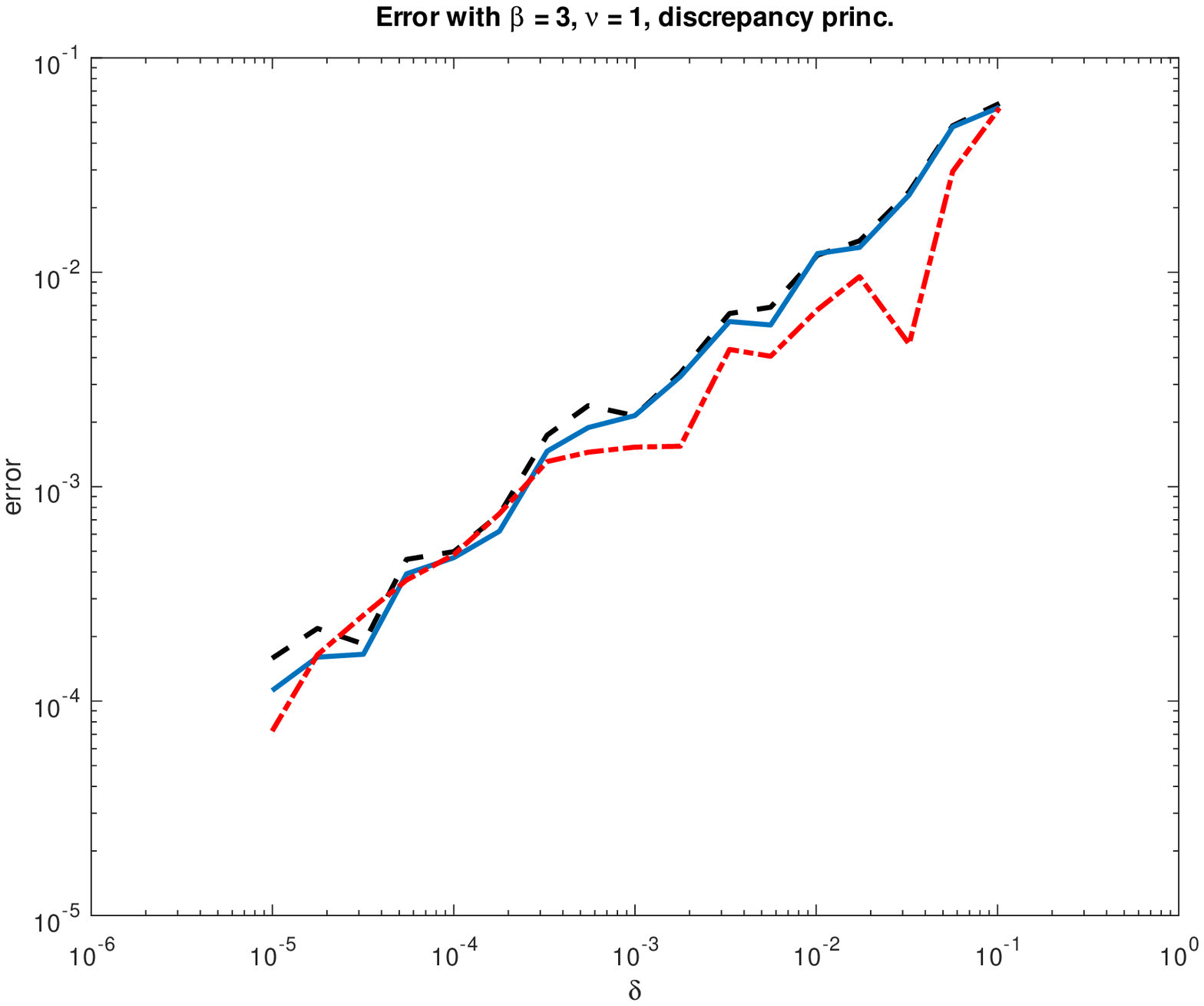}
\caption{Log-log plot of the error $\|\xkd{k(\delta)} -\xd\|$ versus the noiselevel $\delta$
for Nesterov iteration (full line, blue), Landweber iteration (dotted line, black), and the 
$\nu$-method (dashed dotted line, red). Left: optimal stopping rule. Right: stopping by 
discrepancy principle. The parameters $\beta,\nu$ are in an optimal-order regime.} \label{fig1}
\end{center}
\end{figure} 
\begin{figure} 
\begin{center}
\includegraphics[width=0.45\textwidth]{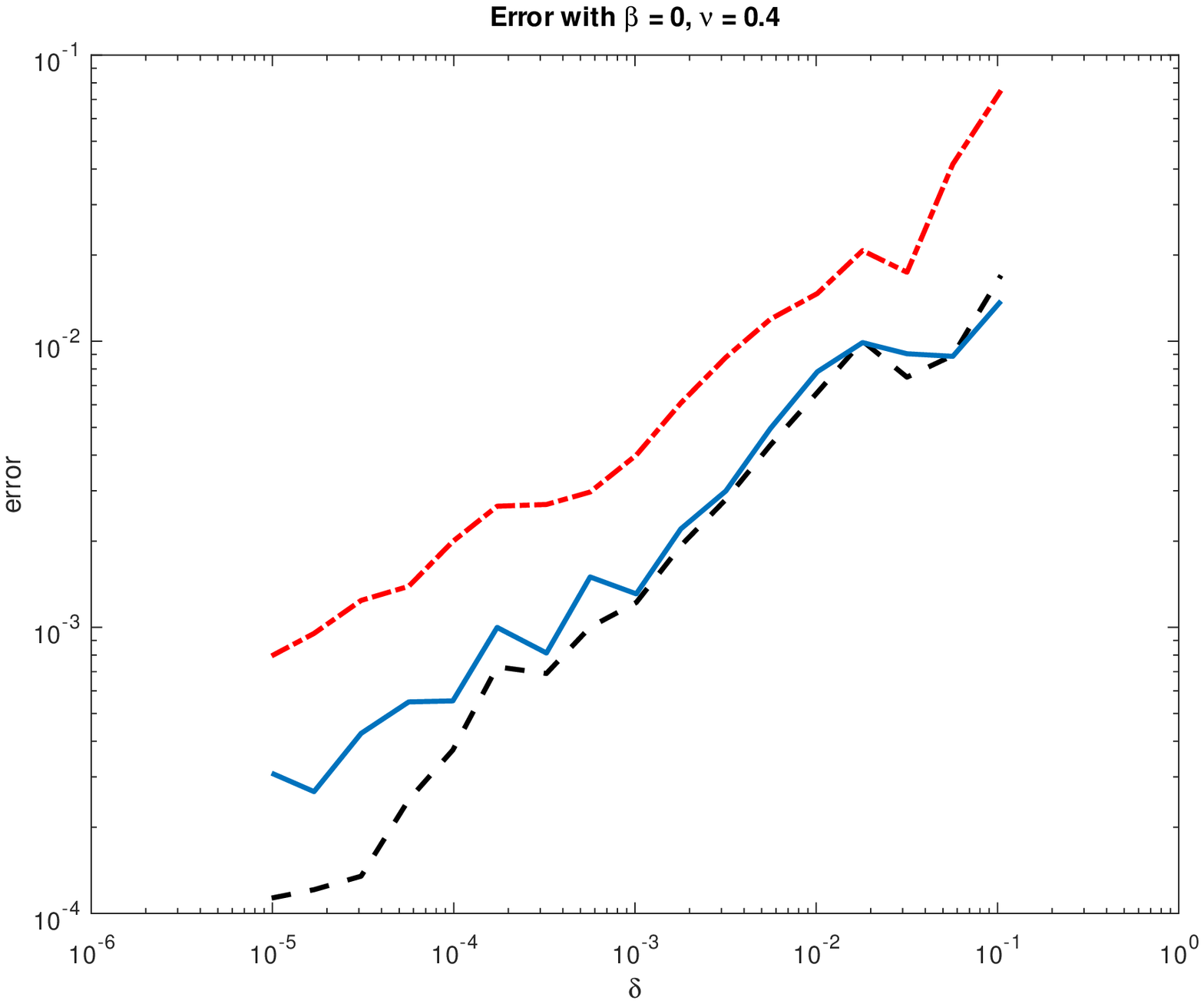}
\includegraphics[width=0.45\textwidth]{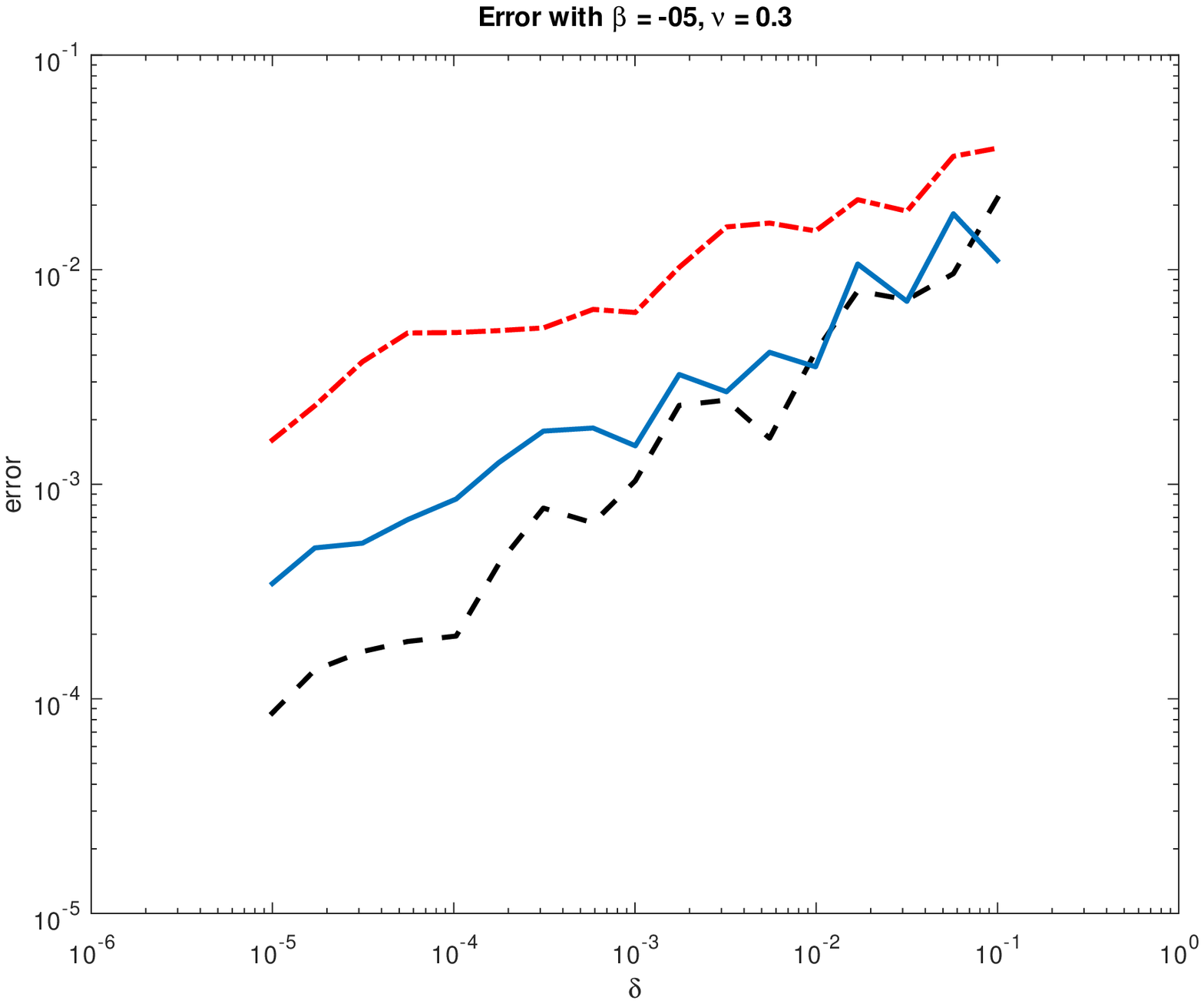}
\caption{The similar plot as in Figure~\ref{fig1}(left) for various iteration parameter in 
a suboptimal-order regime. 
Left: $\nu= 0.4$ and $\beta = 0$. Right: $\nu = 0.3$ and $\beta = -0.5$. Stopping by 
optimal stopping rule $k_{opt}$. }\label{fig2}
\end{center}
\end{figure} 

In Figure~\ref{fig1}, we display the error $\|\xkd{k(\delta)} -\xd\|$ against various noiselevels 
on a log-log scale. The curves correspond to convergence rates for 
Nesterov iteration (full line, blue), Landweber iteration (dotted line, black), and the 
$\nu$-method (dashed dotted line, red). The parameter were chosen as $\beta = 4$ and $\nu = 1$, 
i.e., we are in the optimal-order case covered by item 1 in Theorem~\ref{th:six} and
Theorem~\ref{th:seven}. On the left-hand side we employ the oracle stopping rule using $k_{opt}$
and on the right-hand side we use the discrepancy principle. 

As can be observed, all three methods show a similar (optimal-order) rate, as stated in 
Theorems~\ref{th:six} and \ref{th:seven}. In particular, this verifies one of our findings that the discrepancy 
principle for Nesterov's iteration leads to an optimal-order method provided $\beta$ is chosen appropriately. 

In Figure~\ref{fig2}, we illustrate the semi-saturation phenomenon: Here $\beta$ and $\nu$ are 
deliberately chosen as too small  ($\beta = 0$, $\nu= 0.4$ on the left-hand side and 
$\beta = -0.5$, $\nu = 0.3$  on the right-hand side). We observe that for small $\nu$, 
the convergence rate of the $\nu$-method is slow as a result of its saturation. On the other hand, 
the Nesterov iteration also has a slower rate than the non-saturating Landweber iteration, but, 
as can be expected from our residual polynomial representation, 
it is in between the other two. 
 
We remark that the $\nu$-methods show some unpleasant behaviour when $\nu$ is chosen 
small. The residual is highly oscillating and for small noiselevel we could not even 
reach the prescribed discrepancy, and 
if we did, then the number of iteration was quite high, 
even higher than for Landweber iteration. This might be attributed to our quite aggressive setting 
of the discrepancy principle with $\tau = 1.01$.  
In that respect, the Nesterov iteration was 
very well-behaved, and we had no problem with a small  $\beta$, which is probably due to 
the robust  Landweber-component in the representation \eqref{resibesi}.

The optimal-order convergence only partly illustrates the effective performance of the 
methods. In Tables~\ref{tab1} we therefore provide   the 
ratio of  errors values, i.e., the numbers in the table are 
$\frac{\|\xkd{method,k} - \xd\|}{\|\xkd{Nesterov,k_{opt}} - \xd\|}$, where 
$\xkd{Nesterov,k_{opt}}$ denotes Nesterov iteration with the optimal stopping rule and 
$\xkd{method,k}$ the iteration of the respective method with the respective stopping rule. 
All results correspond to an optimal-order regime of parameters
(those of Figure~\ref{fig1}).
The number of iterations 
(both for the oracle stopping rule and the discrepancy principle) are given in 
Table~\ref{tab2}. 
In these tables,  we also include the corresponding results 
for the conjugate gradient iteration CGNE \cite{Ha95}. 

\begin{table}
 \caption{Errors compared to Nesterov iteration: $\frac{\|\xkd{method,k} - \xd\|}{\|\xkd{Nesterov,k_{opt}} - \xd\|}$. }\label{tab1}
\begin{tabular}{cc|ccccc}
& & \multicolumn{5}{c}{$\delta$} \\ 
Method & Stopping  & $10^{-5}$ & $10^{-4}$ & $10^{-3}$ & $10^{-2}$ & $10^{-1}$ \\ \hline
Nesterov  & $k_{opt}$ & 1 & 1 & 1 & 1 & 1   \\
Landweber  & $k_{opt}$ &  1.15 &   0.83 &  0.96 &  1.05  & 1.06 \\
$\nu$-Method  & $k_{opt}$ &  1.02 &  1.06 &  1.01 &  1.26&  0.97 \\
CGNE & $k_{opt}$ &  1.02 &  0.82 &  1.05 &  1.02  &  0.84   \\
Nesterov  & Discrepancy &  1.58  & 1.10 &  1.41 &  2.84  & 1.90 \\
Landweber  & Discrepancy  &   2.23 & 1.17 &  1.41  & 2.80  & 1.98 \\
$\nu$-Method  & Discrepancy &  1.02 &  1.13 &  1.00 &  1.56  & 1.88  \\
CGNE  & Discrepancy &  1.81  & 1.19 &  1.05 &  2.51  & 1.97   \\
 \end{tabular}
\end{table}

\begin{table}
  \caption{Number of iterations for various methods; setting as in Table~\ref{tab1}.}\label{tab2}
\begin{tabular}{cc|ccccc}
& & \multicolumn{5}{c}{$\delta$}  \\ 
Method & Stopping  & $10^{-5}$ & $10^{-4}$ & $10^{-3}$ & $10^{-2}$ & $10^{-1}$ \\ \hline
Nesterov  & $k_{opt}$ & 371  & 163  &  65  &  26  &  15 \\
Landweber  & $k_{opt}$ &  11000  &  2193  &   512  &   145   &   36  \\
$\nu$-Method  & $k_{opt}$ &   190  &  82  &  33   & 22   &  9 \\
CGNE & $k_{opt}$ &  10  &  6  &  4  &  3  &  2   \\
Nesterov  & Discrepancy &   260  & 111 &   39  &  13   &  1 \\ 
Landweber  & Discrepancy &  5106  & 1080 &   220  &   37    &  1    \\
$\nu$-Method  & Discrepancy &  190  &  96  &  33  &  10   &  1   \\
CGNE  & Discrepancy & 8 &  5 &  4 &  2  & 1  \\
 \end{tabular}
\end{table}

In terms of the number of iteration, the Nesterov iteration is slightly slower 
than the $\nu$-methods (approximately by a constant factor of 1.5) but both have a similar modest increase 
of iterations when $\delta$ is 
decreased. Both need more iteration 
than the CGNE-method, which, of course, is the fastest one by design.
The slightly higher number of iterations might be attributed to the {\em better}
error estimate in \eqref{estdata}. (Note that the $\nu$-methods have a $2$ in place 
of $\sqrt{2}$ there). It might appear a little bit paradoxical that a better estimate 
leads to slower convergence, but this is clear from the theory as the number of 
iteration is a decreasing function of  $\delta$ and thus also of any  factor in front of 
$\delta$.  This factor, however, pays off when considering the total error of the method, 
and we observe that  Nesterov iteration with the optimal choice $k_{opt}$ 
indeed has almost always a slightly smaller error than 
the $\nu$-method. 
Surprisingly, it is in several instances also better than the CGNE-method.
However, the Nesterov method sometimes loses some of its advantages 
against the $\nu$-method, when using the discrepancy principle, but the performance
is still acceptable. 

Some further experiments indicate that the results are rather insensitive to 
overestimating $\beta$. As stated in Remark~\ref{rem2}, the best choice is usually
related to the smoothness index, but there was arose no serious problems when 
$\beta$ was larger.

Further numerical experiments have been performed in \cite{Ne17}: Even though the 
value of $\beta$ was not reported there, the results are consistent with our theory 
with the choice $\beta = 1$. The forward operator there was the Green's function for 
the solution of the 1D boundary value problem $-u'' = f$ with homogeneous boundary conditions. 
Exact solutions with various smoothness  are stated there:  Example~5.1 with $\mu=\frac{1}{8}$, 
Example~5.2 with $\mu = \frac{5}{8}$, and Example~5.3 with $\mu = \frac{17}{8}$. 
We used the same problem and the same examples, but we calculated $A$ by using a FEM-discretization
of the boundary value problem and $A$ as the corresponding solution operator. 
For simplicity we ignored discretization errors and took the discretized (projected) 
solution as $\xd$. 
\begin{figure}[t]
\begin{center}
\includegraphics[width=0.45\textwidth]{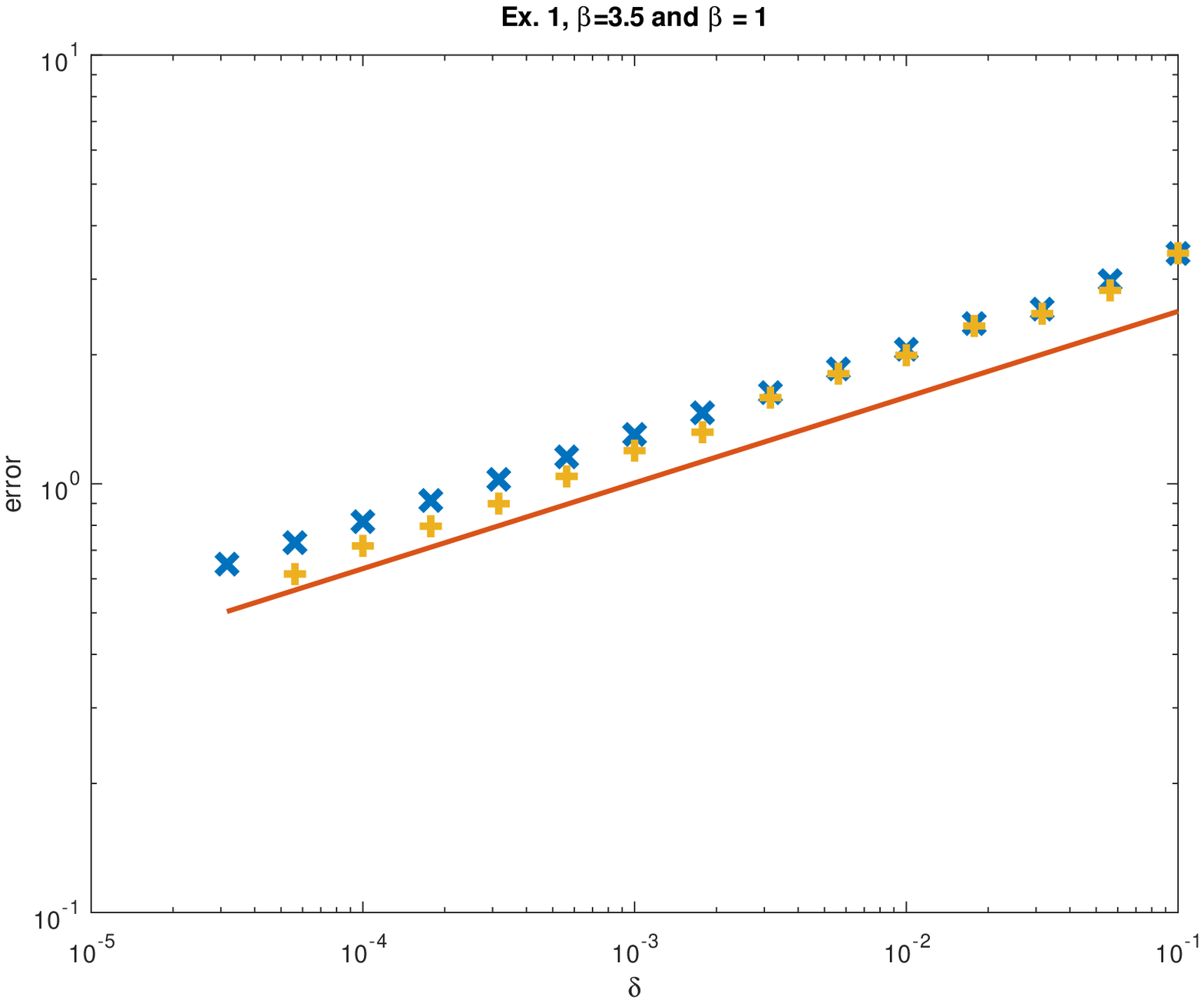} 
\includegraphics[width=0.45\textwidth]{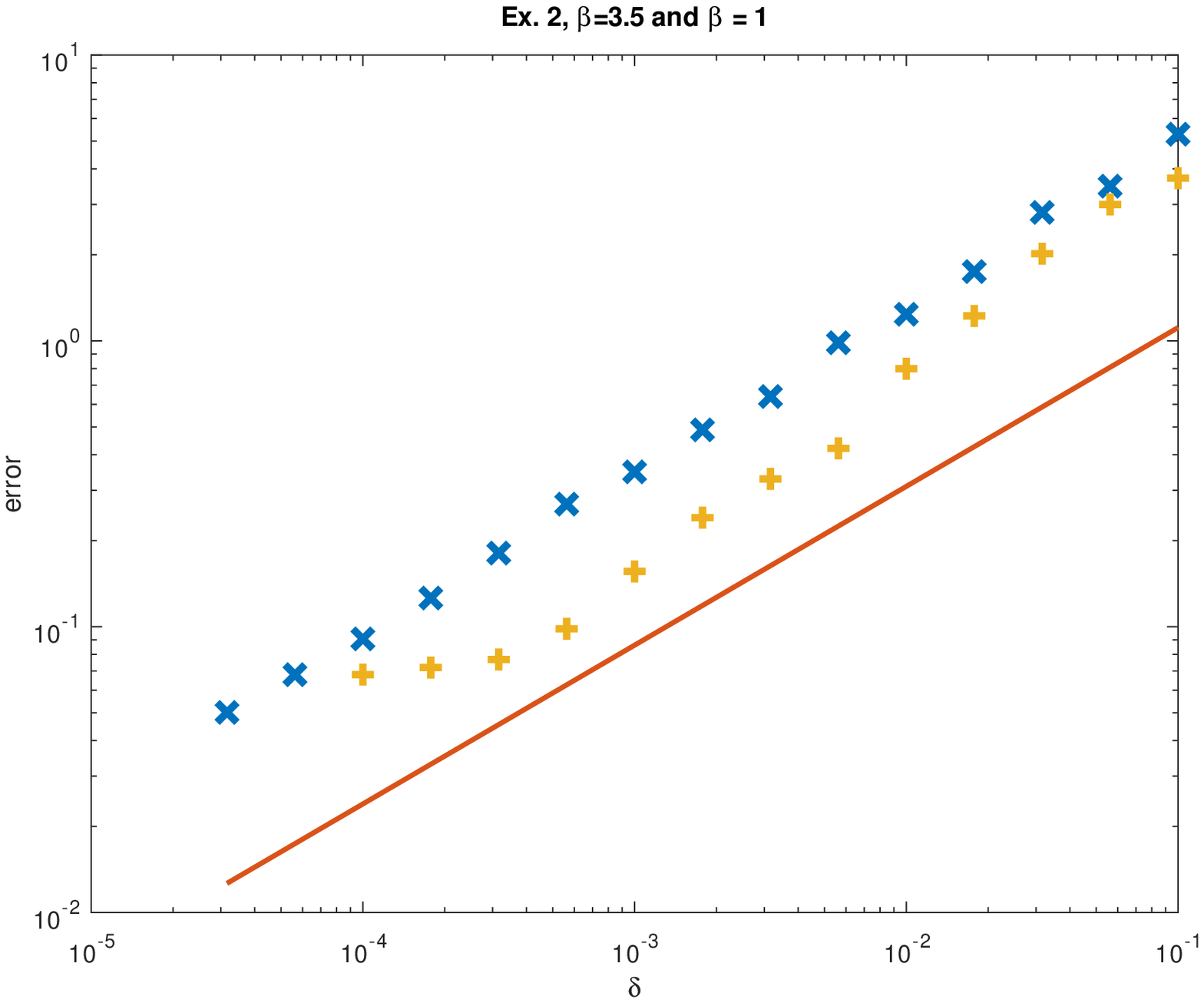} 
\includegraphics[width=0.45\textwidth]{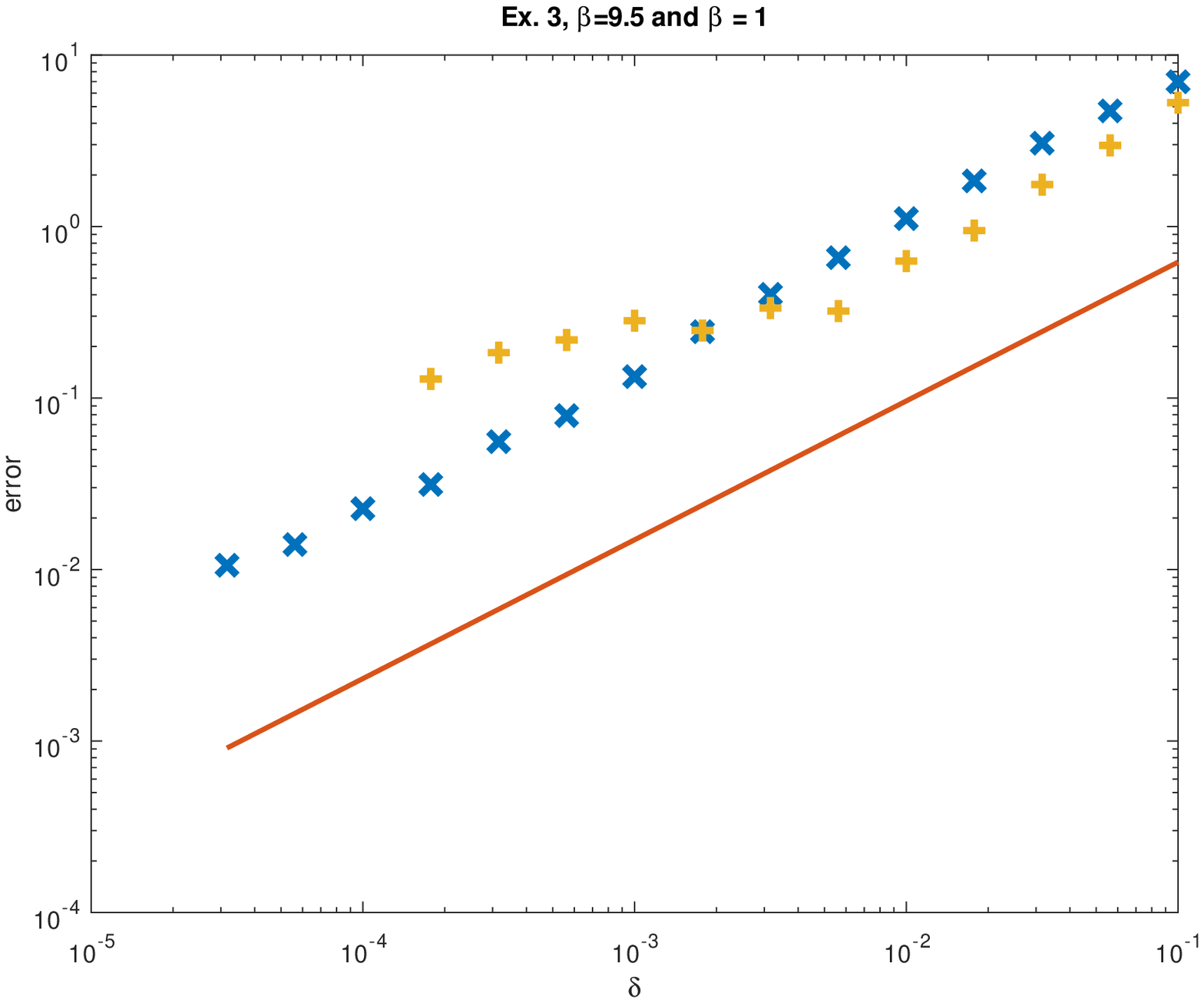} 
\caption{Convergence rates for the examples in \cite{Ne17}. Left: Ex.~1, smoothness index 
$\mu=\frac{1}{8}$. Center: Ex.~2, smoothness index 
$\mu=\frac{5}{8}$. Right:  Ex.~3, smoothness index 
$\mu=\frac{17}{8}$. Displayed are the errors versus the noiselevel on a logarithmic scale. 
A marker 'x' indicates optimal choice of $\beta$, and '+' indicates suboptimal choice $\beta = 1$. 
The full line indicates the optimal order rate. }\label{todo}
\end{center} 
\end{figure} 

The main purpose of this experiment is to verify that the discrepancy principle
($\tau = 1.1$) can be made  
an optimal-order method. We choose $\beta = 3.5$  for the first two examples and 
$\beta = 9.5$ for the third, which should in any case lead to an optimal-order situation.  
In Figure~\ref{todo}, we plotted the error versus the relative noiselevel on a logarithmic
scale for the three examples with this choice of $\beta$, indicated by the marker 'x'. 
As a comparison, we also 
indicated the predicted optimal rate by a solid line. Furthermore, also shown and marked with 
'+' are 
the corresponding results for $\beta = 1$, i.e., in 
 the suboptimal case. 

These results clearly  illustrate that for the discrepancy principle we may achieve the optimal 
order rates with the correct choice of $\beta$ and for a wrong choice of $\beta$ the rate 
deteriorates. For low-smoothness as in Example~1 (left picture in Figure~\ref{todo}, however, 
there seems to occur almost no deterioration contrary to expectation.

\section{Conclusion} 
We have provided a representation of the residual polynomials for Nesterov's acceleration method 
for linear ill-posed problems as a product  of Gegenbauer polynomials and Landweber-type residuals. 
This allowed us to prove optimal-order rates for an a priori stopping rule and the discrepancy 
principle as long as $\beta$ in \eqref{achoice} is sufficiently large. The number of iteration 
is shown to be of the same order as for other fast methods such as the $\nu$-method or the 
conjugate gradients methods. Moreover, our representation clearly explains the observed 
semi-saturation phenomenon. 

Within the class of linear iterative methods, the Nesterov acceleration is an excellent choice, 
as it is a fast method as well as a quite robust one. Although, it must be conceded,
that it cannot compete with the conjugate gradient method in terms of number of iterations. 
However, this is compensated by its flexibility and simplicity of use, which also allows one
to easily integrate it into existing gradient methods and also   to apply it 
in nonlinear cases.

\bibliographystyle{siam}
\bibliography{nest}

\end{document}